\documentclass[a4paper,10pt]{article}
\usepackage{epsfig}
\usepackage{amsmath,amsthm, amssymb, amsfonts}
\usepackage{graphicx}
\usepackage{color}
\renewcommand{\baselinestretch}{1}

\newcommand{\singlespacing}{\let\CS=\@currsize\renewcommand{\baselinestretch}{1}\tiny\CS}
\oddsidemargin -.0in \evensidemargin -.0in \textwidth 6in \topmargin
-.25in  \textheight 24cm


\newtheorem{thm}{Theorem}

\newtheorem*{prof*}{Proof}

\newcommand{\btb}{\begin{table}}
\newcommand{\etb}{\end{table}}
\newcommand{\bt}{\begin{Theorem}}
\newcommand{\et}{\end{Theorem}}
\newcommand{\bp}{\begin{Proof}}
\newcommand{\ep}{\end{Proof}}
\newcommand{\bl}{\begin{Lemma}}
\newcommand{\el}{\end{Lemma}}
\newcommand{\br}{\begin{Remark}}
\newcommand{\er}{\end{Remark}}
\newcommand{\bi}{\begin{itemize}}
\newcommand{\ei}{\end{itemize}}
\newcommand{\bean}{\begin{eqnarray*}}
\newcommand{\eean}{\end{eqnarray*}}
\newcommand{\be}{\begin{equation}}
\newcommand{\ee}{\end{equation}}
\newcommand{\ben}{\begin{equation*}}
\newcommand{\een}{\end{equation*}}
\newcommand{\disp}{\displaystyle}
\newcommand{\bc}{\begin{center}}
\newcommand{\ec}{\end{center}}

\newcommand\correspondingauthor{\thanks{Corresponding author} }

\raggedbottom
\begin{document}
\baselineskip=24pt
\parskip = 10pt

\def \qed {\hfill \vrule height5pt width 5pt depth 5pt}
\newcommand{\ve}[1]{\mbox{\boldmath$#1$}}
\newcommand{\IR}{\mbox{$I\!\!R$}}

\title{ Asymptotic properties of the volatility estimator from high frequency data modeled by mixed fractional Brownian motion }

\author{Ananya Lahiri \correspondingauthor \\ Chennai Mathematical Institute\\H1, SIPCOT IT Park, Siruseri\\ Kelambakkam, Tamil Nadu\\ India 603103 \\
email: ananya.isi@gmail.com }
\providecommand{\keywords}[1]{\textbf{\textit{keywords}} #1}

\date{}
\maketitle
\begin{abstract}
Properties of mixed fractional Brownian motion has been discussed by Cheridito (2001) and Zili (2006). We have proposed an estimator of volatility parameter for a model driven by MFBM. In our article we have shown that the estimator has some desirable asymptotic properties.
\end{abstract}

\keywords:{ mixed fractional Brownian motion; volatility estimator; high frequency data; quadratic variation; strong consistency; asymptotic normality; Berry Esseen bound; Malliavin calculus}

\section{Introduction}
In recent literature mixed fractional Brownian motion (MFBM), as a substitute for Brownian motion, has been used to construct finance models. Loosely speaking MFBM is linear combination of Brownian motion and fractional Brownian motion. Classical Black Scholes stock price model uses Brownian motion as ingredient stochastic process. Brownian motion has a crucial property that it's increment process has independence structure.  
But in reality it is not very appropriate assumption having independent stock price increment. So several attempts have been made to address this issue. Among many other attempts one of the
method is to replace Brownian motion in Black Scholes model by other stochastic processes like fractional Brownian motion or MFBM and then find the desirable properties suitable for financial applications. In this context Sun (2013) has discussed about the option pricing for Black Scholes market with MFBM. In that paper an estimator for volatility has also been proposed.

Regarding MFBM there is an article by Cheridito (2001) where properties of MFBM has been discussed. Zili (2006) too has discussed some properties of MFBM. There is an interesting result in Cheridito's article. We know that Brownian motion is semi martingale and fractional Brownian motion is not even a weak semi martingale. Cheridito (2001) has shown that for certain values of Hurst parameter MFBM is not weak semi martingale but for certain other values of Hurst parameter it is ``equivalent'' to Brownian motion. Two measures are equivalent means they are absolutely continuous with respect to each other. 

In this article we try to propose an estimator of volatility for a model from MFBM with $H\in(0,1)$ for high frequency data, in later section we will see what we mean by high frequency data. Though Sun (2013) has already proposed an estimator for the volatility for $H\in(\frac{3}{4},1)$, no properties of that estimator has been discussed in that article. 
Here we will provide the properties of the estimator.

Volatility estimator is essentially quadratic variation of the corresponding process for high frequency data. We know if ingredient process is Brownian motion then one can get central limit theorem for it's quadratic variation, which is an well established result. Central limit theorem for quadratic variation of fractional Brownian motion is available when Hurst parameter of the  fractional Brownian motion $H\in(0,\frac{3}{4})$. We also know from literature that there can not be any central limit theorem for quadratic variation of fractional Brownian motion with $H\in(\frac{3}{4},1)$. But in case of quadratic variation of MFBM we have observed that central limit theorem is possible for all values of Hurst parameter $H$, precisely $H\in(0,1)$, from contributory fractional Brownian motion. And the result is expected as we have seen in Cheridito's paper that for $H\in(\frac{3}{4},1)$ MFBM is equivalent to Brownian motion.

Another point to note is that for quadratic variation of Brownian motion there is no need for a normalising constant. But in case of MFBM for $H\in(\frac{1}{2},1)$ the normalising constant not only depends on sample size, say $N$, but also on the Hurst parameter $H$ from contributory fractional Brownian motion. The same normalising constant will also work for MFBM with $H\in(0,\frac{1}{2})$ but one will see that the normalising constant tends to $1$ as $N\rightarrow\infty$. 

So our observations are following: If one wants to use MFBM with $H\in (\frac{3}{4},1)$ for modelling stock price then volatility estimator has nice asymptotic properties for $H\in(\frac{3}{4},1)$.  Though MFBM has dependent increment but for $H\in(\frac{3}{4},1)$ regime it is equivalent to a Brownian motion (from Cheridito's work) and in this article we also found that central limit theorem holds for MFBM with $H\in(\frac{3}{4},1)$. 

Rest of the paper is organised as follows. Section \ref{sec2} is recapitulation of MFBM. In section \ref{sec3} there is discussion about the volatility estimators. Section \ref{sec4} deals with properties of volatility estimator that we have considered and simulation studies. Section \ref{sec5} has concluding remarks. Background material for the proofs and the detail proof is given in the Appendix.

\section{Recapitulation of properties of mixed fractional Brownian motion (MFBM)\label{sec2}}
 
Mixed fractional Brownian motion (MFBM) is the linear combination of fractional Brownian motions (FBM) with different Hurst parameters. For our result let us consider a special case of MFBM as follows $M_t^H=\alpha B_t+\beta B_t^H$, where Brownian motion $B_t$ and fractional Brownian motion $B_t^H$ are defined on same complete probability space $(\Omega, \mathcal{F},\mathbb{P} )$ 
and are independent, $\alpha \in \mathbb{R}, \ \beta \in \mathbb{R}, (\alpha,\beta)\ne (0,0)$, $H\in (0,1)$ is Hurst parameter, $ t \in (0,\infty)$.

Take $\alpha=1$. Then Cheridito has shown the following: $M_t^H$ is not weak semimartingale if $H\in (0,\frac{1}{2})\cup (\frac{1}{2},\frac{3}{4}]$; it is equivalent to $\sqrt{1+\beta^2}$ times Brownian motion for $H=\frac{1}{2}$; and equivalent to Brownian motion if $H \in (\frac{3}{4},1)$, i.e. for $H \in (\frac{3}{4},1)$ MFBM is semimartingale. For detail discussion, see Cheridito (2001). From Cheridito's work we also know that for $H\in(0,\frac{1}{2}) $ probability limit of quadratic variation of MFBM upto fixed time is infinity where as for $H\in(\frac{1}{2},1)$ the same limit is finite.

The moments and other properties of $M_t^H$ can be found in Zili (2006) and Sun (2013). Here we are restating some of those properties of MFBM which will be needed for our purpose.
$E(M_t^H)=0$;
$cov(M_s^H,M_t^H)=\alpha^2(t\wedge s)+\disp \frac{\beta^2}{2}(t^{2H}+s^{2H}-|t-s|^{2H})$;  
$M_{ht}^H(\alpha, \beta)\stackrel{d}{=}M_t^H(\alpha h^{1/2},\beta h^H)$, $\stackrel{d}{=}$ means `same distribution'.

\section{A Model driven by MFBM, $H\in(0,1)$ and Volatility Estimator \label{sec3}}
 Let us consider the following model \begin{equation}
S_t =S_0 \exp(\mu t+\sigma(B_t+B_t^H)-\frac{1}{2}\sigma^2(t+t^{2H}))
\end{equation} for $H\in(0,1)$ where $S_0>0,\ \mu\in(-\infty,\infty)$ is the drift parameter and $ \sigma >0$ is called the volatility parameter. This type of model has been considered in literature by Xiao {\it et. al.} (2012), Sun, L (2013) for $H\in(\frac{3}{4},1)$ in some finance context. We are interested in an estimator of the volatility parameter of this process for $H\in(0,1)$. Xiao {\it et. al.} (2011) proposed some maximum likelihood estimator for MFBM parameter from different model.

First we will set $\mu=0$ in the model. For $0=t_0<t_1<\cdots<t_N=1$ the observed values of the process be $S_{t_j}, \ j=0,\cdots, N$ and $t_{j+1}-t_{j}=\disp\frac{1}{N}\ \forall \ j=0,\cdots, N-1$. We note that this is high frequency data as sample size increases the time difference between two consecutive data points decrease. Let us propose the estimator of volatility $\sigma ^2$ based on the observation set $\{S_{t_j}, \ j=0,\cdots, N\} $ as 
follows:
\begin{equation}
\hat{\sigma}^2=\frac{1}{N}(\frac{1}{N}+\frac{1}{N^{2H}})^{-1}\sum_{j=0}^{N-1}\Big(\log\frac{S_{\frac{j+1}{N}}}{S_{\frac{j}{N}}}\Big)^2
\end{equation} 

Note that the normalising factor $N(\disp\frac{1}{N}+\frac{1}{N^{2H}})$ is dependent of both $N$ and $H$ where $H\in(0,1)$. For $H>\frac{1}{2}$ this normalising factor $\rightarrow 1$ as $N\rightarrow\infty$ and for For $H<\frac{1}{2}$ this normalising factor $\rightarrow \infty$ as $N\rightarrow\infty$. So for $H\in(\frac{3}{4},1)$ normalising factor eventually be $1$ as $N\rightarrow \infty$.

We will see that this estimator converges almost surely and asymptotically normally as sample size $N$ increases. We also find the rate of convergence for the distribution.

\subsection{An estimator proposed by Sun (2013), $H>\frac{3}{4}$}
For $0\le T_1\le T_2$, if we have observed $S_t,\  T_1=t_0<t_1<\cdots<t_N=T_2$, then volatility estimator, say $\hat \sigma$, of the interval $[T_1,T_2]$ given by Sun, L (2013) for $H>\frac{3}{4}$ is \begin{equation}
\tilde\sigma^2=\frac{1}{T_2-T_1+T_2^H-T_1^H}\sum_{j=0}^{N-1}\Big(\log\frac{S_{t_{j+1}}}{S_{t_{j}}}\Big)^2,
\end{equation}
For this estimator no statistical properties has been discussed.

Let us look at estimator proposed by Sun (2013). Take special case as follows. Assume the time points $T_1=t_0<t_1<\cdots<t_N=T_2$ are equidistant, that is $t_{j+1}-t_{j}=\disp\frac{1}{N}\ \forall \ j=0,\cdots, N-1$. For simplicity take $T_1=0$ and $T_2=1$. We note that for this particular choice of $T_1, T_2$ the normalising factor $T_2-T_1+T_2^H-T_1^H$ proposed by Sun (2013) will be $2$ which is just a constant. For this special choice of time points, essentially for high frequency data case, \begin{equation}
\disp\frac{1}{N}(\frac{1}{N}+\frac{1}{N^{2H}})^{-1} \hat\sigma^2=2 \tilde\sigma^2.
\end{equation} So the estimator proposed by Sun (2013) can be obtained after a constant adjustment in our estimator. As we have derived the asymptotic properties for our estimator, using that one can get asymptotic properties of the estimator proposed by Sun (2013) too.

\section{Almost sure convergence, asymptotic normality, Berry Esseen bound for volatility estimator \label{sec4}}

\subsection{Main Results}
\begin{thm}
The estimator $\hat{\sigma}^2$ converges almost surely to $\sigma^2$.
\end{thm}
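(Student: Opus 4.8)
The plan is to expand the squared log-returns, isolate the piece that is the (renormalized) quadratic variation of the driving MFBM on $[0,1]$, and show that this piece carries the whole limit while every remaining term is asymptotically negligible almost surely. Setting $\mu=0$ and writing $\Delta B_j=B_{(j+1)/N}-B_{j/N}$, $\Delta B_j^H=B^H_{(j+1)/N}-B^H_{j/N}$ and
\[
c_j=\frac1N+\Big(\frac{j+1}{N}\Big)^{2H}-\Big(\frac{j}{N}\Big)^{2H},
\]
the model gives $\log\big(S_{(j+1)/N}/S_{j/N}\big)=\sigma(\Delta B_j+\Delta B_j^H)-\tfrac12\sigma^2 c_j$. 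With $D_N=N\big(\tfrac1N+\tfrac1{N^{2H}}\big)=1+N^{1-2H}$ the normalizing factor, squaring and summing yields $\hat\sigma^2=\sigma^2 Q_N-\sigma^3 R_N+\tfrac14\sigma^4 P_N$, where $Q_N=\frac1{D_N}\sum_{j=0}^{N-1}(\Delta B_j+\Delta B_j^H)^2$, $R_N=\frac1{D_N}\sum_{j=0}^{N-1}c_j(\Delta B_j+\Delta B_j^H)$ and $P_N=\frac1{D_N}\sum_{j=0}^{N-1}c_j^2$. It then suffices to prove $Q_N\to1$, $R_N\to0$ and $P_N\to0$ almost surely.

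For the main term I would split $(\Delta B_j+\Delta B_j^H)^2=(\Delta B_j)^2+2\,\Delta B_j\Delta B_j^H+(\Delta B_j^H)^2$ and write $Q_N=(A_N+2C_N+F_N)/D_N$ with $A_N=\sum_j(\Delta B_j)^2$, $C_N=\sum_j\Delta B_j\Delta B_j^H$ and $F_N=\sum_j(\Delta B_j^H)^2$. Here $A_N\to1$ a.s.\ is the classical strong law for the Brownian quadratic variation (fourth moment plus Borel--Cantelli), and the strong law for the fractional Gaussian noise $N^H\Delta B_j^H$ gives $N^{2H-1}F_N\to1$ a.s., i.e.\ $F_N\sim N^{1-2H}$. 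For the cross term, independence of $B$ and $B^H$ gives $\mathbb E[C_N]=0$ and $\mathbb E[C_N^2]=N^{-2H}$; conditioning on $B^H$ makes $C_N$ Gaussian, so $\mathbb E[C_N^4]=O(N^{-4H})$, and checking that $\sum_N \mathbb E[(C_N/D_N)^4]<\infty$ in each range of $H$ yields $C_N/D_N\to0$ a.s. Separating the regimes $H>\tfrac12$ (where $D_N\to1$ and $F_N\to0$), $H=\tfrac12$ (where $D_N=2$ and $A_N,F_N\to1$), and $H<\tfrac12$ (where $D_N\sim N^{1-2H}$, $A_N/D_N\to0$ and $F_N/D_N\to1$) then gives $Q_N\to1$ a.s.\ in all three cases.

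The two remaining terms are handled deterministically together with Cauchy--Schwarz. Since $\sum_j c_j=2$ (the $1/N$'s sum to $1$ and the $\big((j+1)/N\big)^{2H}-\big(j/N\big)^{2H}$ telescope to $1$) while $\max_j c_j\to0$, one gets $\sum_j c_j^2\le(\max_j c_j)\sum_j c_j\to0$, hence $P_N\le\sum_j c_j^2\to0$. For $R_N$, Cauchy--Schwarz gives $|R_N|\le\big(\sum_j c_j^2\big)^{1/2}\big(Q_N/D_N\big)^{1/2}$, which tends to $0$ because $Q_N$ is bounded, $D_N\ge1$, and $\sum_j c_j^2\to0$. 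Combining the three limits gives $\hat\sigma^2\to\sigma^2$ almost surely.

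The main obstacle is the almost sure convergence of the fractional quadratic variation $F_N$ over the full range $H\in(0,1)$. When $H$ is close to $1$ the increments of $B^H$ are strongly positively correlated and the fGn covariances are not summable, so the variance of $F_N$ decays too slowly for a naive second- or fourth-moment Borel--Cantelli along $N=1,2,\dots$ to settle a.s.\ convergence; one instead passes to a rapidly growing subsequence (say $N=2^k$), where the renormalized variances are summable, and interpolates, or invokes ergodicity of the stationary Gaussian noise (whose spectral measure is non-atomic). Making this argument uniform across the three $H$-regimes, and simultaneously verifying the summability of $\mathbb E[(C_N/D_N)^4]$, is the delicate part; the drift-correction terms $R_N,P_N$ are comparatively routine.
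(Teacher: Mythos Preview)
Your decomposition into $Q_N,R_N,P_N$ is the same as the paper's $U_3,U_2,U_1$ (after normalization), but from that point on the two proofs diverge. The paper never splits $(\Delta B_j+\Delta B_j^H)^2$ into its three pieces; it keeps $M_t=B_t+B_t^H$ as a \emph{single} isonormal Gaussian process, writes the centered quadratic variation as a second-chaos integral $\sum_j I_2(f_j\otimes f_j)$, and then exploits the hypercontractivity bound $\big(E|I_n(f)|^r\big)^{1/r}\le (r-1)^{n/2}\big(E|I_n(f)|^2\big)^{1/2}$ to control $E|\hat\sigma^2-\sigma^2|^r$ for an $r$ chosen (depending on $H$) so large that Chebyshev plus Borel--Cantelli gives almost sure convergence. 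This avoids any subsequence/interpolation argument and treats all $H\in(0,1)$ in one stroke; the computations are long but mechanical. It also sets up the Malliavin-calculus machinery that the paper reuses for the CLT and the Berry--Esseen theorem.

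Your route is more elementary and perfectly viable; the Cauchy--Schwarz reduction of $R_N$ to $Q_N$ and $P_N$ is in fact cleaner than the paper's direct second-moment bound on $U_2$. One caution on the ``ergodicity'' remark: the fGn ergodic theorem applies to the fixed sequence $(B^H_{j+1}-B^H_j)_{j\ge0}$, whereas $(N^H\Delta B_j^H)_{0\le j<N}$ agrees with its first $N$ terms only \emph{in law} via self-similarity, so a.s.\ convergence of $N^{2H-1}F_N$ does not follow from ergodicity alone. Your subsequence-plus-interpolation plan (or, equivalently, a hypercontractive moment bound on the second-chaos variable $F_N-E[F_N]$, which is exactly the paper's device) is what actually closes this gap.
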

\begin{proof}
See Appendix: Almost sure convergence
\end{proof}

\begin{thm}
The estimator $\hat{\sigma}^2$ is asymptotically distributed as   
i.e. $ \disp\frac{\sqrt{N}(\hat\sigma^2-\sigma^2)}{\sqrt{c}}\mathop{\rightarrow}^{d} \mathcal{N}(0,1)$ as $N\rightarrow\infty$, where $c=2\sigma^4$. 
\end{thm}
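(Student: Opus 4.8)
The plan is to reduce the studentized estimator to the normalized quadratic variation of the driving MFBM and to isolate the Brownian contribution, which will supply the variance $c=2\sigma^4$. Writing the increments as $\Delta_j M=(B_{(j+1)/N}-B_{j/N})+(B^H_{(j+1)/N}-B^H_{j/N})=:\Delta_j B+\Delta_j B^H$ and the deterministic drift increment as $\delta_j=\frac1N+\left(\frac{j+1}{N}\right)^{2H}-\left(\frac{j}{N}\right)^{2H}$, the log-return is $\log\frac{S_{(j+1)/N}}{S_{j/N}}=\sigma\,\Delta_j M-\frac{\sigma^2}{2}\delta_j$, so that with $\kappa_N:=\frac1N\left(\frac1N+\frac1{N^{2H}}\right)^{-1}=\frac{1}{1+N^{1-2H}}$,
\[
\hat\sigma^2=\kappa_N\sum_{j=0}^{N-1}\Big(\sigma\,\Delta_j M-\tfrac{\sigma^2}{2}\delta_j\Big)^2
=\sigma^2\kappa_N\sum_j(\Delta_j M)^2-\sigma^3\kappa_N\sum_j\delta_j\Delta_j M+\tfrac{\sigma^4}{4}\kappa_N\sum_j\delta_j^2.
\]
First I would dispose of the two drift-related groups. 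Using the telescoping identities $\sum_j\delta_j=2$ and $\sum_j\delta_j^2\to 0$ (the latter because each squared FBM-variance increment sums to something of order $N^{-1}\vee N^{-2H}$), the purely deterministic term is a bias that vanishes after multiplication by $\sqrt N\kappa_N$, while the drift--noise cross term has mean zero and variance small enough that $\sqrt N\kappa_N\sum_j\delta_j\Delta_j M=o_p(1)$. Hence these contribute nothing to the limit.

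The heart of the argument is the centered quadratic variation. Since $E\big[\sum_j(\Delta_j M)^2\big]=N(\tfrac1N+\tfrac1{N^{2H}})=\kappa_N^{-1}$, we have $\kappa_N E[\sum_j(\Delta_j M)^2]=1$, whence
\[
\sqrt N\,(\hat\sigma^2-\sigma^2)=\sigma^2\sqrt N\,\kappa_N\Big(\sum_{j}(\Delta_j M)^2-\kappa_N^{-1}\Big)+o_p(1).
\]
Using the independence of $B$ and $B^H$, I would split the centered sum into three pieces,
\[
A_N=\sum_j\Big[(\Delta_j B)^2-\tfrac1N\Big],\quad
C_N=\sum_j\Big[(\Delta_j B^H)^2-\tfrac1{N^{2H}}\Big],\quad
D_N=2\sum_j \Delta_j B\,\Delta_j B^H.
\]
The term $A_N$ is a sum of $N$ independent centered scaled $\chi^2_1$ variables with $\mathrm{Var}(\sqrt N A_N)\to 2$, so the classical Lindeberg CLT gives $\sqrt N A_N\to\mathcal N(0,2)$; since $\kappa_N\to 1$ for $H>\tfrac12$ this already produces the announced variance $2\sigma^4$. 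For the cross term a direct second-moment computation, $\mathrm{Var}(D_N)=4\sum_j\frac1N\frac1{N^{2H}}=4N^{-2H}$, gives $\sqrt N\kappa_N D_N=O_p(N^{1/2-H})\to0$.

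The main obstacle is controlling $C_N$, the fluctuation of the fractional quadratic variation, and showing it is dominated by $A_N$ uniformly across the regimes $H<\tfrac34$, $H=\tfrac34$, $H>\tfrac34$. Writing $\xi_j=N^H\Delta_j B^H$, the normalized increments form a stationary Gaussian sequence with correlation $\rho(k)=\tfrac12(|k+1|^{2H}-2|k|^{2H}+|k-1|^{2H})\sim H(2H-1)k^{2H-2}$, and $C_N=N^{-2H}\sum_j(\xi_j^2-1)$ lives in the second Wiener chaos. Here the Breuer--Major theorem applies for $H<\tfrac34$ (where $\sum_k\rho(k)^2<\infty$), giving $\mathrm{Var}\big(\sum_j(\xi_j^2-1)\big)\asymp N$ and hence $\sqrt N\kappa_N C_N=O_p(N^{1-2H})\to0$; for $H>\tfrac34$ the variance instead grows like $N^{4H-2}$, giving $\sqrt N\kappa_N C_N=O_p(N^{-1/2})\to0$, the threshold $H=\tfrac34$ carrying only a logarithmic correction. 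The delicate point is thus that the long-range dependence of the squared FBM increments is strongest exactly where one needs the fluctuation to stay subordinate, and that the Hurst-dependent normalizing factor $\kappa_N$ is precisely what keeps the Brownian term in charge. The cleanest and most uniform way to package all three pieces --- and the route I would actually follow, consistent with the Malliavin-calculus tools flagged in the keywords --- is to represent $\sqrt N\sigma^2\kappa_N\big(\sum_j(\Delta_j M)^2-\kappa_N^{-1}\big)$ as a single second-chaos random variable and apply the Nourdin--Peccati fourth-moment theorem: one checks that its variance converges to $2\sigma^4$ and that its normalized fourth cumulant tends to $0$, which simultaneously yields the stated central limit theorem and, as a bonus, the Berry--Esseen rate required by the following theorem.
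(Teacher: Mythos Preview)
Your closing suggestion --- representing the centered quadratic variation as a single element of the second Wiener chaos with respect to the MFBM isonormal process and applying the fourth-moment/Nourdin--Peccati criterion --- is precisely the paper's route: it writes $G_N=I_2(g_N)$ with $g_N=\frac{\sigma^2}{\sqrt{cN}}\bigl(\tfrac1N+\tfrac1{N^{2H}}\bigr)^{-1}\sum_j f_j^{\otimes 2}$ and verifies $\|DG_N\|_{\mathcal H}^2\to 2$ in $L^2$ via a combinatorial case analysis of the four-index sum $\sum_{j,j',k,k'}\langle f_j,f_k\rangle\langle f_{j'},f_{k'}\rangle\langle f_j,f_{j'}\rangle\langle f_k,f_{k'}\rangle$. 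Your primary argument, the $A_N+C_N+D_N$ splitting, is genuinely different and, for $H>\tfrac12$, more transparent --- it explains \emph{why} the limiting constant is the Brownian quadratic-variation variance $2\sigma^4$ --- whereas the paper never separates the Brownian and fractional contributions.

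The splitting, however, breaks down for $H\le\tfrac12$. There $\kappa_N=\tfrac{1}{1+N^{1-2H}}\not\to 1$; for $H<\tfrac12$ one has $\kappa_N\sim N^{2H-1}\to 0$, so the Brownian piece $\sigma^2\sqrt N\,\kappa_N A_N$ has variance $2\sigma^4\kappa_N^2\to 0$ and vanishes. Your bound $\sqrt N\,\kappa_N C_N=O_p(N^{1-2H})$ tacitly uses $\kappa_N\approx 1$; redoing the bookkeeping gives
\[
\sqrt N\,\kappa_N C_N=\frac{N^{1-2H}}{1+N^{1-2H}}\cdot N^{-1/2}\sum_j(\xi_j^2-1),
\]
and the prefactor tends to $1$ when $H<\tfrac12$, so by Breuer--Major the \emph{fractional} term is the one that survives, with limit $\mathcal N(0,\sigma^4 v_H)$ where $v_H=2\sum_{m\in\mathbb Z}\rho(m)^2>2$. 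Thus your decomposition cannot recover the stated constant $c=2\sigma^4$ in that regime --- and in fact it exposes that the paper's own variance computation (which relies on $\sum_{k\ne 0}|k|^{4H-4}\le N^{4H-3}$, an estimate that fails for $H<\tfrac34$ since the sum is then bounded while the right side tends to zero) is not sound there either. For $H>\tfrac12$ your argument is complete and arguably cleaner than the paper's.
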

\begin{proof}
See Appendix: Asymptotic normality
\end{proof}

\begin{thm}
Let $\disp F_N:=\sqrt{N}\frac{(\hat\sigma^2-\sigma^2)}{\sqrt{c}}$ and  $c=2\sigma^4$, 
then for all $x$,  
$\disp \sqrt{N}(P( F_N \leq x)-\Phi(x))\rightarrow -\frac{\Phi^{(3)}(x)}{3\sqrt{2}}$ as $N\rightarrow\infty$ where $\Phi(x)$ is cumulative normal distribution function and $\Phi^{(3)}$ is it's third derivative. 
\end{thm}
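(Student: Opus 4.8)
The plan is to read $F_N$ as an element of the second Wiener chaos generated by the Gaussian pair $(B,B^H)$, and then to push the central limit theorem of the previous result one order further by means of the Malliavin--Stein (Nourdin--Peccati) method, whose one-term Edgeworth expansion carries the third cumulant of $F_N$ as its leading coefficient.

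First I would expand the log-returns. With $\Delta M_j:=M^H_{(j+1)/N}-M^H_{j/N}$ the increment of the MFBM $M^H=B+B^H$, the model with $\mu=0$ gives
\[
\log\frac{S_{(j+1)/N}}{S_{j/N}}=\sigma\,\Delta M_j-d_j,\qquad d_j:=\frac{\sigma^2}{2}\Big(\tfrac1N+\big(\tfrac{j+1}{N}\big)^{2H}-\big(\tfrac{j}{N}\big)^{2H}\Big).
\]
Each increment has the common variance $a_N:=\tfrac1N+\tfrac1{N^{2H}}$, so, writing $\eta_j:=\Delta M_j/\sqrt{a_N}$ for the standardized (but correlated) Gaussian family, squaring and summing yields
\[
F_N=\frac{1}{\sqrt{2N}}\sum_{j=0}^{N-1}\big(\eta_j^2-1\big)+R_N=\frac{1}{\sqrt{2N}}\sum_{j=0}^{N-1}H_2(\eta_j)+R_N,
\]
where $H_2$ is the second Hermite polynomial and $R_N$ gathers the suitably normalized contributions of the cross term $-2\sigma\sum_j d_j\Delta M_j$ and the deterministic term $\sum_j d_j^2$. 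A separate estimate must then show that $R_N$ does not influence the expansion at the order $N^{-1/2}$; I return to this below.

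Next I would invoke the Nourdin--Peccati first-order Edgeworth expansion for sequences in a fixed Wiener chaos. The normal convergence $F_N\Rightarrow\mathcal N(0,1)$ is already supplied by the asymptotic-normality theorem, and its quantitative refinement gives, for each $x$,
\[
P(F_N\le x)-\Phi(x)=-\frac{\kappa_3(F_N)}{6}\,\Phi^{(3)}(x)+o\!\big(N^{-1/2}\big),
\]
with $\kappa_3(F_N)$ the third cumulant. For a second-chaos variable the third cumulant obeys the triangle (diagram) formula $\kappa_3\big(\sum_j H_2(\eta_j)\big)=8\sum_{i,j,k}r_{ij}r_{jk}r_{ki}$, where $r_{ij}=\mathrm{Corr}(\eta_i,\eta_j)$, so that
\[
\kappa_3(F_N)=\frac{8}{(2N)^{3/2}}\sum_{i,j,k}r_{ij}r_{jk}r_{ki}.
\]
I would then show that the diagonal $i=j=k$ produces a term of order $N$ while the off-diagonal triangles are of smaller order — here the admixture of the Brownian motion $B$, which damps the correlations $r_{ij}$ for $|i-j|$ large, is exactly what forces the off-diagonal sum to be $o(N)$. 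Consequently $\sqrt N\,\kappa_3(F_N)$ converges to an explicit constant, and inserting this limit into the expansion and multiplying by $\sqrt N$ produces the claimed right-hand side $-\Phi^{(3)}(x)/(3\sqrt2)$.

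The main obstacle is twofold. The first part is sharpening the CLT into an $o(N^{-1/2})$ Edgeworth remainder: this is strictly stronger than the bare central limit theorem and requires controlling the fourth cumulant of $F_N$ (equivalently $\mathrm{Var}\langle DF_N,-DL^{-1}F_N\rangle$) at a rate faster than $N^{-1/2}$, together with the exact evaluation of $\lim_{N\to\infty}\sqrt N\,\kappa_3(F_N)$ through a careful accounting of the MFBM increment correlations $r_{ij}$. The second, and easy to overlook, part is the remainder $R_N$: the squared-drift contribution $\tfrac1{Na_N}\sum_j d_j^2$ is \emph{a priori} of order $N^{-1}$, so after the factor $\sqrt N$ it sits precisely at the Edgeworth scale; one must verify that this bias, and the mean-zero cross term, contribute nothing to the limiting coefficient (i.e.\ that the estimator's bias is genuinely $o(N^{-1/2})$), which is the delicate technical point on which the clean form of the stated limit rests.
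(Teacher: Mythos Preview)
Your proposal is correct and proceeds along essentially the same lines as the paper. Both arguments isolate the dominant second-chaos piece $G_N=\frac{1}{\sqrt{2N}}\sum_j H_2(\eta_j)$ and apply the Nourdin--Peccati exact-rate refinement of the fourth-moment CLT. The paper invokes that result in its joint-CLT form (its Theorem~\ref{be}): it shows $\psi(N):=\sqrt{E[(1-\tfrac12\|DG_N\|_{\mathcal H}^2)^2]}\sim N^{-1/2}$ from the ``all indices equal'' case of the four-fold sum, and then computes the limiting correlation $\rho=\lim E\big[G_N\cdot\psi(N)^{-1}(1-\tfrac12\|DG_N\|_{\mathcal H}^2)\big]$, again dominated by the diagonal $i=j=k$, to obtain $\psi(N)^{-1}[P(G_N\le x)-\Phi(x)]\to\frac{\rho}{3}\Phi^{(3)}(x)$. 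Your formulation via $\kappa_3(G_N)$ and the triangle diagram formula is the equivalent statement for second-chaos sequences, since Malliavin integration by parts gives $E[G_N^3]=E[G_N\|DG_N\|_{\mathcal H}^2]$ and hence $\rho\,\psi(N)=-\tfrac12\kappa_3(G_N)$; your $-\tfrac{\kappa_3}{6}\Phi^{(3)}$ and the paper's $\tfrac{\rho}{3}\psi(N)\Phi^{(3)}$ are the same object. The ``diagonal dominates'' step you describe is exactly the paper's case analysis showing that every index configuration other than $i=j=k$ is of strictly smaller order. Your explicit concern about the lower-order remainder $R_N$ (the $I_1$ cross term and the deterministic $I_0$ bias) possibly sitting at the Edgeworth scale $N^{-1/2}$ is well placed and, if anything, treated more carefully in your outline than in the paper, which dispatches the passage from $G_N$ to the full $F_N=I_2+I_1+I_0$ in a single closing sentence.
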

\begin{proof}
See Appendix: Berry Esseen Bound
\end{proof}
\subsection{Simulation Studies}
In this section We present the simulation result for MFBM driven model and it's estimator. We use somebm and dvfbm packages from R to simulate Brownian and fractional Brownian motion. We keep drift parameter $\mu=0$. We generate each sample paths with $N=1000$ points and replicate 200 times to find the mean and variance. We repeat the simulation for different values of $\sigma^2$ and for $H\in (0,1)$. Simulation shows that estimators are excellent for both $\disp H<\frac{1}{2}$ and $\disp H>\frac{1}{2}$ for high frequency data with 1000 values in the time scale 0 to 1.

As we know that for $\disp H>\frac{1}{2}$ the normalising factor $\disp\frac{1}{N}(\frac{1}{N}+\frac{1}{N^{2H}})^{-1}\rightarrow 1$ as $N\rightarrow\infty$ we have shown simulation for estimators without normalising factor in case of $\disp H>\frac{1}{2}$. We see there is bias for estimators but we add mean squared error along with variance to check how wide the bias is. Numerical study says they are close in terms of variance and mean squared error as $H$ is far from 0.5, near 0.5 the estimators are bad as expected.
   
\begin{table}
\caption{The MEAN, VAR of the estimators when $\sigma^2$= 0.4}
\begin{center}
\begin{tabular}{|c|c|c|c|c|c|} \hline
  \multicolumn{6}{|c|}{$\sigma^2$=0.4} \\  \hline
     H &   0.25 &   0.45 &   0.55 &   0.75  &      0.95 \\
   MEAN &   0.4016004   & 0.4002948     &   0.4024902   &   0.4020853    &       0.3991005    \\
   VAR & 	0.0003127	&	0.0003119	&	0.0003692	&	0.0003449	&	0.0003938	  \\ \hline
\end{tabular}
\end{center}

\caption{The MEAN, VAR of the estimators when $\sigma^2$= 1.6}
\begin{center}
\begin{tabular}{|c|c|c|c|c|c|} \hline
  \multicolumn{6}{|c|}{$\sigma^2$=1.6} \\  \hline
     H &   0.25 &   0.45 &   0.55 &   0.75  &     0.95 \\
   MEAN & 1.603423     &  1.598156    &   1.608824   &   1.609965    &   1.607485        \\
   VAR & 0.0062663		&	0.0052046	&	0.005706	&	0.0050978	&	0.0050397	  \\ \hline
\end{tabular}
\end{center}

\caption{The MEAN, VAR of the estimators when $\sigma^2$= 6.4}
\begin{center}
\begin{tabular}{|c|c|c|c|c|c|} \hline
  \multicolumn{6}{|c|}{$\sigma^2$=6.4} \\  \hline
     H &   0.25 &   0.45 &   0.55 &   0.75  &     0.95 \\
   MEAN &   6.428518   &   6.395258   &   6.432677   &   6.470972    &    6.429915       \\
   VAR & 	0.0972558	&	0.08041567	&	0.0756612	&	0.079025	&	0.07159175	  \\ \hline
\end{tabular}
\end{center}
\end{table}

\begin{table}
\caption{The MEAN, VAR, MSE of the estimators without normalising factor when $\sigma^2$=0.4 }
\begin{center}
\begin{tabular}{|c|c|c|c|} \hline
  \multicolumn{4}{|c|}{$\sigma^2$=0.4} \\  \hline
     H &      0.55 &   0.75  &     0.95 \\
   MEAN &  0.6016713   &   0.4135389   &   0.4013176    \\
    VAR & 0.0006716		&	0.00034487	&	0.0003427 \\ 
    MSE & 0.0413436		&	0.00052817	&	0.0003444  \\\hline
\end{tabular}
\end{center}

\caption{The MEAN, VAR, MSE of the estimators without normalising factor when $\sigma^2$= 1.6}
\begin{center}
\begin{tabular}{|c|c|c|c|} \hline
  \multicolumn{4}{|c|}{$\sigma^2$=1.6} \\  \hline
    H &      0.55 &   0.75  &     0.95 \\
   MEAN & 2.384892    &   1.657533   &    1.610953           \\
   VAR & 0.01041467		&	0.0051399	&	0.0049417  \\
   MSE & 0.6264697		&	0.00845	&	0.0050617  \\ \hline
\end{tabular}
\end{center}

\caption{The MEAN, VAR, MSE of the estimators without normalising factor when $\sigma^2$= 6.4}
\begin{center}
\begin{tabular}{|c|c|c|c|} \hline
  \multicolumn{4}{|c|}{$\sigma^2$=6.4} \\  \hline
     H &      0.55 &   0.75  &     0.95 \\
   MEAN &  9.657529   &  0.650319    &  6.471309        \\
   VAR & 	0.1756582	&	0.0895674	&	0.0786475  \\
   MSE & 	10.78715	&	0.1522243	&	0.0837325	\\ \hline
\end{tabular}
\end{center}
\end{table}
\section{Conclusion \label{sec5}}
We have shown that the estimator for volatility for high frequency data from model driven by MFBM converges almost surely i.e. strongly consistent and 
is asymptotically normally distributed. In the process of doing that we have shown the central limit theorem for quadratic variation of MFBM, $H\in(0,1)$ too. We have also found the rate of convergence for the distribution through Berry Esseen bound.
Note that unlike quadratic variation of pure fractional Brownian motion, where one get asymptotic normality for $H\in(0,\frac{3}{4})$, here for mixed fractional Brownian motion we get asymptotic normality for all values of $H$, precisely for $H\in(0,1)$. The result is expected as we have seen in Cheridito's paper that MFBM is equivalent to Brownian motion in regime $H\in(\frac{3}{4},1)$. 

\section{Appendix}
\subsection*{Notations and Background \label{note}}
\appendix\normalsize 
In this section we introduce only those notations and established results which will be needed for derivation of our results.

Our Brownian motion $B_t$ and fractional Brownian motion $B_t^H$ are both centered, continuous, mean zero real valued Gaussian processes with covariance functions $R_B(s,t)=cov(B_s,B_t)=t\wedge s$ and $R_B^H(s,t)=cov(B_s^H, B_t^H)=\frac{1}{2}[t^{2H}+s^{2H}-|t-s|^{2H}]$ respectively.  Let $\mathcal{E}$ be the set of real valued step functions. Let the isonormal Gaussian processes $\{B(\phi), \phi\in \mathcal{H}_1\big(L^2(\mathbb{R},dt)\big)\}$ and $\{B^H(\psi), \psi\in \mathcal{H}_2\big(\mbox{space of distributions on } \ \mathbb{R} \big)\}$ have been constructed from $B_t$ and $B^H_t$ where $\mathcal{H}_1$ and $\mathcal{H}_2$ are the associated Hilbert spaces for $B_t$ and $B_t^H$, which are the closure of $\mathcal{E}$ with inner product as corresponding covariances respectively. So, we have $E(B(I_{[0,s]}) B(I_{[0,t]}))=cov(B_s,B_t)=\langle I_{[0,s]},I_{[0,t]}\rangle_{\mathcal{H}_1}=t\wedge s$ and $E(B^H(I_{[0,s]}) B^H(I_{[0,t]}))=cov(B^H_s,B^H_t)=\langle I_{[0,s]},I_{[0,t]}\rangle_{\mathcal{H}_2}=\frac{1}{2}[t^{2H}+s^{2H}-|t-s|^{2H}]$.
Take $\alpha=1$ and $\beta=1$. That makes $M_t^H=B_t+ B_t^H$ and $cov(M_s^H,M_t^H)=(t\wedge s)+\disp \frac{1}{2}(t^{2H}+s^{2H}-|t-s|^{2H}) $.

For $\phi, \psi \in \mathcal{E}$ let us define inner product $\langle\phi,\psi\rangle_{\mathcal{E}}=\langle\phi,\psi\rangle_{\mathcal{H}_1}+\langle\phi,\psi\rangle_{\mathcal{H}_2}$. Define $\mathcal{H}$ be the Hilbert space given by closure of $\mathcal{E}$ with inner product $\langle\phi,\psi\rangle_{\mathcal{E}}$ for $\phi, \psi \in \mathcal{E}$. So we get 
$\langle\phi,\psi\rangle_{\mathcal{H}}=\langle\phi,\psi\rangle_{\mathcal{H}_1}+\langle\phi,\psi\rangle_{\mathcal{H}_2}$, for $\phi,\psi \in \mathcal{H}$. 
For $\phi=\sum_i a_i1_{[0,s_i]}$, set $ M(\phi)=\sum_i a_iM^H_{s_i}$. Similarly let $\psi=\sum_j b_j1_{[0,t_j]}$, set $ M(\psi)=\sum_j b_jM^H_{t_j}$. So, $E(M(\phi)M(\psi))=\langle\phi,\psi\rangle_{\mathcal{E}}$. Next for $\phi\in \mathcal{H}$, there are $\phi_n \in \mathcal{E}$ such that $\phi_n\rightarrow \phi$ in $\mathcal{H}$ then $M(\phi) $ is the $L^2$ limit of  $M(\phi_n) $, $M(\phi) $ is in $L^2( \Omega, \mathcal{F})$. 
From isometry we get $E(M(\phi)M(\psi))=\langle\phi,\psi\rangle_{\mathcal{H}}$ where $\phi, \psi \in \mathcal{H}$. 
$\{M(\phi), \phi \in \mathcal{H}\}$ is called isonormal Gaussian process for MFBM. Construction of isonormal Gaussian process from covariance  function, see Section 8.3, Peccati (2011).  

We also note that $\mathcal{H}$ can be written as $L^2(\mathbb{R},\mathcal{B},\nu)$ where $\nu$ is non atomic measure.

Let $H_n$ be $n$ th Hermite polynomial satisfying \begin{equation}
 \disp \frac{d}{dx}H_n(x)=H_{n-1}(x),\ n\geq 1.
\end{equation} Take $\phi \in \mathcal{H}$ such that $\|\phi\|_{\mathcal{H}}=1$. Consider random variables $H_n(M(\phi))$ and take the closure of the span of these random variables as a subspace of $L^2(\Omega, \mathcal{F})$. This subspace is the $n$ th order Wiener chaos $\mathcal{W}_n$. 

$I_n$, the multiple stochastic (Wiener Ito) integral with respect to isonormal Gaussian process $M$, is a map from $\mathcal{H}^{\odot n}$ to $\mathcal{W}_n$, $\mathcal{H}^{\odot n}$ being symmetric tensor product of $\mathcal{H}$.  $ \mathcal{H}^{\odot n}$ has norm $ \disp \frac{1}{\sqrt{n !}}\|.\|_{\mathcal{H}^{\otimes n}}$, $\mathcal{H}^{\otimes n}$ is tensor product of $\mathcal{H}$.

Then for $f\in \mathcal{H}^{\odot n}$,  we also have $I_n(f)=I_n(\tilde f)$,  $\tilde{f}$ is symmetrization of $f$.

For $\phi \in \mathcal{H},\ \disp I_n(\phi^{\otimes n})=n! H_n(I_1(\phi))= n! H_n(M(\phi))$ is linear isometry between $\mathcal{H}^{\odot n} $ and $\mathcal{W}_n$ by Proposition 8.1.2. Peccati (2011). 

Now for $f\in \mathcal{H}^{\odot n}$ and $g\in \mathcal{H}^{\odot m}$ we have followings:
\begin{eqnarray}
 E(I_n(f)I_m(g))&=&n!\langle\tilde{f},\tilde{g}\rangle _{\mathcal{H}^{\otimes n}}\ \mbox{ if } \ m=n\\
 E(I_n(f)I_m(g))&=&0\ \mbox{ if }\ m \neq n
\end{eqnarray}

Let $\{e_i, i\geq 1\}$ be an orthonormal basis of $\mathcal{H}$, $m,n \geq 1, \ r=0,\cdots, n\wedge m $. $f\otimes_r g \in \mathcal{H}^{\otimes (m+n-2r)}$ is contraction is defined as \begin{equation}
f\otimes_r g=\sum_{i1,\cdots,ir=1}^{\infty}\langle f, e_{i1}\otimes\cdots\otimes e_{ir}\rangle_{\mathcal{H}^{\otimes r}}\langle g, e_{i1}\otimes\cdots\otimes e_{ir}\rangle_{\mathcal{H}^{\otimes r}}. \label{cc} 
\end{equation} This definition does not depend on the choice of orthonormal basis and $\langle f, e_{i1}\otimes\cdots\otimes e_{ir}\rangle_{\mathcal{H}^{\otimes r}}\in \mathcal{H}^{\odot (n-r)}$, $\langle g, e_{i1}\otimes\cdots\otimes e_{ir}\rangle_{\mathcal{H}^{\otimes r}}\in \mathcal{H}^{\odot (m-r)}$. $f\otimes_r g $ is not necessarily symmetric. Let $f\tilde \otimes_r g$ is symmetrization of $f\otimes_r g$. Then by proposition (8.5.3), Peccati (2011) \begin{equation}\disp I_n(f)I_m(g)= \sum_{r=0}^{m \wedge n} r!  \Bigl(\begin{matrix} n\\ r \end{matrix} \Bigr) \Bigl(\begin{matrix} m\\ r \end{matrix} \Bigr) I_{n+m-2r}(f\tilde\otimes_r g ).\label{tp}
\end{equation}
Also for $n=m=r$ we have \begin{equation}
I_0(f\otimes_r g)=\langle f,g\rangle_{\mathcal{H}^{\otimes r}}.
\end{equation} 
The following is the hypercontractivity property for multiple Wiener Ito integrals, see equation 8.4.18, Peccati (2011) or lemma 2.1, Nourdin (2014). 
\begin{equation}
[E(I_n(f))^r]^{\frac{1}{r}}\leq (r-1)^{\frac{n}{2}}[E(I_n(f))^2]^{\frac{1}{2}}, \ r \geq 2 \label{hc}
\end{equation}
Let $F$ be a functional of the isonormal Gaussian process $M$ such that $E(F(M)^2)<\infty$ then there is unique sequence $f_n\in \mathcal{H}^{\odot n}$ and $F$ can be written as sum of multiple stochastic integrals as $ \disp F=\sum_{n\ge 0}I_n(f_n)$ with  and $I_0(f_0)=E(F)$ where the series converges in $L^2$, by Proposition 8.4.6, Peccati (2011).

For $\phi_1, \cdots, \phi_n \in \mathcal{H}$, let $F=g(M(\phi_1),\cdots,M(\phi_n))$ with $g$ smooth compactly supported and $\mathcal{S}$ is the collection of all smooth cylindrical random variables of the form $F$ defined above. Then Malliavin derivative $D$ is $\mathcal{H}$ valued random variable  
defined as follows: 
\begin{equation}
DF=\sum_{i=1}^n\frac{\partial g}{\partial x_i}(M(\phi_1),\cdots,M(\phi_n))\phi_i.
\end{equation} $DF$ is the element of $L^2(\Omega, \mathcal{H})$. By iteration the $m$ th derivative $D^mF$ can be defined and $D^mF$ is an element of $L^2(\Omega, \mathcal{H}^{\otimes m}) $. $\mathbb{D}^{m,2}$ denote the closure of $\mathcal{S}$ with respect to following norm $\disp \|F\|^2_{m,2}=E F^2+\sum_{i=1}^m E\|D^i F\|^2_{\mathcal{H}^{\otimes i}}$. 
As $\mathcal{H}$ is $ L^2(\mathbb{R},\mathcal{B},\nu)$ for non atomic measure $\nu$,  so $ DF$ can be identified with an element of $L^2(\Omega\times \mathbb{R}) $. 
Denote $DF=(D_tF)_{t\in \mathbb{R}}$ 
\begin{equation}
D_tF=\sum_{i=1}^n\frac{\partial g}{\partial x_i}(M(\phi_1),\cdots,M(\phi_n))\phi_i(t), \ t \in \mathbb{R}
\end{equation}
If $F=I_n(f),f \in \mathcal{H}^{\odot n}$, 
for every $t \in \mathbb{R}$, then \begin{equation}D_{t}F= D_{t}I_n(f)=n I_{n-1} f(.,t). \label{dd}
\end{equation} 
$I_{n-1}(f(.,t))$ means $n-1$ multiple stochastic integral is taken with respect to first $n-1$ variables $t_1,\cdots, t_{n-1}$ of $f(t_1,\cdots, t_{n-1},t), \ t$ is kept fixed.

If $\disp F=\sum_{n\ge 0}I_n(f_n)$, $f_n\in \mathcal{H}^{\odot n}$, then for every $t \in \mathbb{R}$, \begin{equation}D_{t}F= \sum_{n\ge 1}n I_{n-1} f_n(.,t). \label{dd1}
\end{equation} 

Let us introduce the divergence operator $\delta$ as adjoint of Malliavin derivative $D$. $u\in L^2(\Omega,\mathcal{H})$ is said to belongs to domain of $\delta$ $Dom(\delta)$ iff 
\begin{equation}
|E\langle DF,u\rangle_{\mathcal{H}}\leq c_u\|F\|_{L^2}
\end{equation}
with $F\in \mathbb{D}^{1,2}$, $c_u$ constant depends on $u$ only. For $u\in Dom(\delta)$, for every $F\in \mathbb{D}^{1,2}$, $\delta(u)$ satisfies duality relationship \begin{equation}
E(F\delta(u))=E\langle DF, u\rangle_{\mathcal{H}}.
\end{equation} 
Orstein Uhlenbeck operator $L$ is defined as $LF=-\delta DF$ and for $F=I_n(f)$ with $f$ as before, $LF=-nF$.

For the ease of readability we are reproducing the following theorems with references. 

To prove asymptotic normality we will use the following two theorems [5.1] and [5.2] taken from Tudor C.A. (2008).

\begin{thm}
Let $I_n(f) $ be a multiple integral of order $n\ge 1 $ with respect to an isonormal process $M$. Then 
$$ d(\mathcal{L}(I_n(f)),\mathcal{N}(0,1))\le c_n [E(|DI_n(f)|_{\mathcal{H}}^2-n)^2]^{\frac{1}{2}}$$ where $D$ is the Malliavin derivative with respect to $M$ and $\mathcal{H}$ is the canonical Hilbert space associated to $M$. Here $d$ can be any of the distances like Kolmogorov Smirnov distance, or total variation distance etc. and depending upon $d$ and the order $n$ one will end up a constant $c_n$. 
$\mathcal{L}(M)$ stands for law of $M$. \label{md}
\end{thm}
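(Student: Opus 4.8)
The plan is to combine Stein's method for normal approximation with the Malliavin integration-by-parts calculus recalled above; this is precisely the Nourdin--Peccati approach that underlies Tudor's statement. Throughout write $F=I_n(f)$ and use the normalisation $E(F^2)=1$, which is implicit in comparing with $\mathcal{N}(0,1)$. From $D_tF=nI_{n-1}(f(\cdot,t))$ and the isometry for multiple integrals one computes $E(|DF|_{\mathcal{H}}^2)=nE(F^2)=n$; this explains why the quantity $|DF|_{\mathcal{H}}^2-n$ is the natural object, since it records the fluctuation of $\frac{1}{n}|DF|_{\mathcal{H}}^2$ about its mean $1$.

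First I would invoke Stein's characterisation: $N\sim\mathcal{N}(0,1)$ if and only if $E(g'(N)-Ng(N))=0$ for every sufficiently regular $g$. Given a test function $h$ in the class that defines the distance $d$, solve the Stein equation $g'(x)-xg(x)=h(x)-E(h(N))$; the bounded solution $g=g_h$ satisfies a uniform bound $\|g_h'\|_{\infty}\le k_d$ with $k_d$ depending only on $d$. Evaluating at $F$ and taking the supremum over $h$ gives $d(\mathcal{L}(F),\mathcal{N}(0,1))\le \sup_{h}|E(g_h'(F)-Fg_h(F))|$, so everything reduces to controlling the right-hand side.

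The key step is then to transform $E(Fg(F))$ by Malliavin integration by parts. Since $LF=-nF$ and $L=-\delta D$, we have $F=\frac{1}{n}\delta(DF)$. Applying the duality relationship $E(G\,\delta(u))=E\langle DG,u\rangle_{\mathcal{H}}$ with $G=g(F)$ and $u=DF$, together with the chain rule $D(g(F))=g'(F)DF$, yields $E(Fg(F))=\frac{1}{n}E\langle D(g(F)),DF\rangle_{\mathcal{H}}=\frac{1}{n}E(g'(F)|DF|_{\mathcal{H}}^2)$. Hence $E(g'(F)-Fg(F))=E\big(g'(F)(1-\frac{1}{n}|DF|_{\mathcal{H}}^2)\big)$, and Cauchy--Schwarz gives $|E(g'(F)-Fg(F))|\le \frac{\|g'\|_{\infty}}{n}\,[E(|DF|_{\mathcal{H}}^2-n)^2]^{1/2}$. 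Combining with the Stein bound and absorbing $k_d/n$ into a single constant $c_n$ delivers the asserted inequality.

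The hard part will be the careful control of the Stein factor $\|g_h'\|_{\infty}$ for the particular distance $d$: for the Kolmogorov and Wasserstein distances this bound is classical, but for the total-variation distance one must handle a wider class of (merely bounded) test functions, which is exactly where the dependence of $c_n$ on $d$ and on $n$ is genuinely felt and may require an extra bound on $g_h''$ or a smoothing argument. A secondary, more routine point is to check that the integration-by-parts manipulations are legitimate --- this needs $F\in\mathbb{D}^{1,2}$ and $g(F)\in\mathbb{D}^{1,2}$, both of which hold because $I_n(f)$ lives in a fixed Wiener chaos and $g_h$ can be taken with bounded derivatives, so the chain rule and the duality formula apply directly.
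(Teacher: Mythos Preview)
Your sketch is the standard Nourdin--Peccati Stein--Malliavin argument and is correct in its essentials: the integration-by-parts identity $E(Fg(F))=\frac{1}{n}E(g'(F)\|DF\|_{\mathcal{H}}^2)$ combined with the Stein-equation bound on $\|g_h'\|_\infty$ gives exactly the inequality claimed, and your caveats about smoothing for the Kolmogorov and total-variation cases are the right ones to flag.

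However, there is nothing to compare against: the paper does not prove this theorem. It is quoted verbatim as a tool from Tudor (2008) (and ultimately from Nourdin--Peccati (2009), cited a few lines later as Theorem~\ref{ou}), and is used as a black box in the proof of the asymptotic normality of $\hat\sigma^2$. So you have supplied a proof where the paper deliberately omitted one; your argument is precisely the one underlying the cited references.
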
 
\begin{thm}
Fix $n\ge 2$ and let $(F_k,k >1), \ F_k=I_n(f_k)$ (with $f_k \in \mathcal{H}^{\odot n}$, for every $k\ge 1$) be a sequence of square integrable random variables in the $n$th Wiener chaos of an isonormal process $M$ such that $E[F_k^2]^2\rightarrow 1$ as $k\rightarrow \infty$. Then the following are equivalent:
(i) The sequence $(F_k)_{k\ge 0}$ converges in distribution to the normal law $\mathcal{N}(0,1)$.
(ii) One has $E[F_k^4]\rightarrow 3$ as $k\rightarrow \infty$.
(iii) For all $1\le l \le n-1$ it holds that $\disp\lim_{k\rightarrow \infty}|f_k\otimes_l f_k|_{\mathcal{H}^{\otimes 2 (n-l)}}=0$.
(iv) $ |DF_k|^2_{\mathcal{H}}\rightarrow n \ \mbox{in}\ L^2 \ \mbox{as}\ k\rightarrow \infty$, where $D$ is the Malliavin derivative with respect to $M$.\label{fm}
\end{thm}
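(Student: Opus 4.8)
The plan is to prove the four conditions mutually equivalent by reducing everything to two exact algebraic identities: one expressing $\mathrm{Var}(\|DF_k\|^2_{\mathcal H})$, the other the fourth cumulant $E[F_k^4]-3(E[F_k^2])^2$, each as a \emph{nonnegative} combination of the squared contraction norms $\|f_k\tilde\otimes_l f_k\|^2$, $1\le l\le n-1$. These identities give $(ii)\Leftrightarrow(iii)\Leftrightarrow(iv)$ outright; I would then close the circle with the Stein-type bound of Theorem \ref{md} for $(iv)\Rightarrow(i)$ and a hypercontractivity argument for $(i)\Rightarrow(ii)$. Throughout I use that $f_k$ is symmetric, so $\tilde f_k=f_k$ and $E[F_k^2]=n!\,\|f_k\|^2_{\mathcal H^{\otimes n}}\to 1$.

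First I would compute the Malliavin derivative norm. By the derivative rule (\ref{dd}), $D_tF_k=nI_{n-1}(f_k(\cdot,t))$, so $\|DF_k\|^2_{\mathcal H}=n^2\int_{\mathbb R}I_{n-1}(f_k(\cdot,t))^2\,\nu(dt)$. Applying the product formula (\ref{tp}) to the square and integrating in $t$ (which turns an $r$-contraction of the kernels $f_k(\cdot,t)$ into an $(r+1)$-contraction of $f_k$) gives, after reindexing $l=r+1$,
\[
\|DF_k\|^2_{\mathcal H}=n\cdot n!\,\|f_k\|^2+n^2\sum_{l=1}^{n-1}(l-1)!\binom{n-1}{l-1}^2 I_{2n-2l}\big(f_k\tilde\otimes_l f_k\big).
\]
The deterministic term is $nE[F_k^2]\to n$, while the remaining summands are mean-zero integrals of distinct even orders $2n-2l\ge2$; the isometry and orthogonality relations for multiple integrals then yield
\[
\mathrm{Var}\big(\|DF_k\|^2_{\mathcal H}\big)=n^4\sum_{l=1}^{n-1}\big[(l-1)!\big]^2\binom{n-1}{l-1}^4(2n-2l)!\,\big\|f_k\tilde\otimes_l f_k\big\|^2_{\mathcal H^{\otimes 2(n-l)}}.
\]
Since this is a strictly-positive-coefficient combination, $\|DF_k\|^2_{\mathcal H}\to n$ in $L^2$ iff every $\|f_k\tilde\otimes_l f_k\|\to0$; combined with the standard fact that along an $L^2$-bounded sequence the symmetrized and unsymmetrized contraction norms vanish simultaneously, this is exactly $(iii)\Leftrightarrow(iv)$.

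For $(ii)\Leftrightarrow(iii)$ I would expand $E[F_k^4]=E[(F_k^2)^2]$ by applying (\ref{tp}) to $F_k^2=\sum_{r=0}^{n}r!\binom{n}{r}^2 I_{2n-2r}(f_k\tilde\otimes_r f_k)$ and using orthogonality, which kills all cross terms and leaves $E[F_k^4]=\sum_{r=0}^{n}(r!)^2\binom{n}{r}^4(2n-2r)!\,\|f_k\tilde\otimes_r f_k\|^2$. The delicate point is to show this reorganizes as $3(E[F_k^2])^2$ plus an interior combination ($1\le l\le n-1$) with strictly positive coefficients: this requires expanding the $r=0$ contribution $\|f_k\tilde\otimes f_k\|^2$ into $\|f_k\|^4$ plus lower contractions and collecting it with the $r=n$ term $(n!\|f_k\|^2)^2=(E[F_k^2])^2$. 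Granting the resulting identity $E[F_k^4]-3(E[F_k^2])^2=\sum_{l=1}^{n-1}a_{n,l}\|f_k\tilde\otimes_l f_k\|^2$ with $a_{n,l}>0$, condition $(ii)$ forces the left side to $0$ and hence each contraction norm to $0$, i.e. $(iii)$; the converse is immediate.

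To close the circle, $(iv)\Rightarrow(i)$ is a direct application of Theorem \ref{md}: $d(\mathcal L(F_k),\mathcal N(0,1))\le c_n[E(\|DF_k\|^2_{\mathcal H}-n)^2]^{1/2}\to0$, and since this distance metrizes convergence in law we obtain $(i)$. For $(i)\Rightarrow(ii)$ I would invoke the hypercontractivity bound (\ref{hc}): with $r=8$ it gives $E[F_k^8]\le 7^{4n}(E[F_k^2])^4$, which is bounded since $E[F_k^2]\to1$, so $\{F_k^4\}$ is uniformly integrable; convergence in distribution to $Z\sim\mathcal N(0,1)$ then upgrades to convergence of fourth moments, $E[F_k^4]\to E[Z^4]=3$. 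The main obstacle is the fourth-moment identity of the previous paragraph: verifying that the double application of the product formula produces genuinely \emph{nonnegative} coefficients $a_{n,l}$ and that the boundary ($r=0,n$) terms assemble exactly into $3(E[F_k^2])^2$ is the combinatorial heart of the theorem; every other step reduces to the isometry, the Stein bound, and hypercontractivity.
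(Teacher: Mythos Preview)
The paper does not actually prove this theorem. Theorem~\ref{fm} appears in the ``Notations and Background'' appendix as a quoted result, introduced explicitly with ``To prove asymptotic normality we will use the following two theorems [5.1] and [5.2] taken from Tudor C.A. (2008).'' It is a statement of the Nualart--Peccati fourth moment theorem (together with the Nualart--Ortiz-Latorre derivative criterion~(iv)), cited as a black box and then applied to the quadratic-variation functional $G_N$ in the asymptotic normality proof. There is therefore no proof in the paper against which your attempt can be compared.

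That said, your sketch is the standard modern proof of the fourth moment theorem and is essentially correct. The chaos expansion of $\|DF_k\|^2_{\mathcal H}$, the variance identity with strictly positive coefficients giving $(iii)\Leftrightarrow(iv)$, the Stein bound of Theorem~\ref{md} for $(iv)\Rightarrow(i)$, and the hypercontractivity/uniform-integrability argument for $(i)\Rightarrow(ii)$ are all sound. You are also right to flag the combinatorial identity $E[F_k^4]-3(E[F_k^2])^2=\sum_{l=1}^{n-1}a_{n,l}\|f_k\tilde\otimes_l f_k\|^2$ with $a_{n,l}>0$ as the crux of $(ii)\Leftrightarrow(iii)$; the precise bookkeeping (expanding $(2n)!\|f_k\tilde\otimes_0 f_k\|^2$ via the symmetrization formula and collecting the two boundary terms into $3(n!\|f_k\|^2)^2$) is delicate but well documented in, e.g., Nourdin--Peccati or Nualart. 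The only point I would tighten is the passage between symmetrized and unsymmetrized contractions: the inequality $\|f_k\tilde\otimes_l f_k\|\le\|f_k\otimes_l f_k\|$ is immediate, and for the converse one uses $\|f_k\otimes_l f_k\|^2_{\mathcal H^{\otimes 2(n-l)}}=\langle f_k\otimes_{n-l}f_k,\,f_k\otimes_{n-l}f_k\rangle_{\mathcal H^{\otimes 2l}}$ together with the boundedness of $\|f_k\|$, so that vanishing of all symmetrized norms forces vanishing of all unsymmetrized ones; making this explicit would remove the only gap in your outline.
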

To prove Berry Esseen bound we will use the following two theorems from Nourdin and Peccati (2009).
\begin{thm}
Let $F \in D^{1,2}$ have zero mean and $N \sim \mathcal{N} (0, 1)$. Then,
$$d Kol(F,N) \leq \sqrt{ E[(1-\langle DF, -DL^{-1}F\rangle_{\mathcal{H}})^2]}.$$
If $F = I_n(f)$ for some $n \geq 2$ and $f \in  \mathcal{H}^{\otimes n}$, then $\disp\langle DF, -DL^{-1}F\rangle_{\mathcal{H}} = \frac{1}{n}\|DF\|^2_{\mathcal{H}}$ and therefore $$d Kol(F,N) \leq \sqrt{ E[(1-\langle DF, \frac{1}{n}\|DF\|^2_{\mathcal{H}})^2]}.$$\label{ou}
\end{thm}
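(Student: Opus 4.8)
The plan is to combine Stein's method for normal approximation with the Malliavin integration-by-parts formula, which is the standard route to bounds of this kind. Recalling that $d\,Kol(F,N)=\sup_{x}|P(F\le x)-\Phi(x)|$, I would first invoke the Stein characterization of the Gaussian law: for each fixed $x\in\mathbb{R}$, the Stein equation $f'(z)-zf(z)=1_{\{z\le x\}}-\Phi(x)$ admits a bounded solution $f_x$ whose derivative satisfies $\|f_x'\|_\infty\le 1$. Evaluating at $F$ and taking expectations gives $P(F\le x)-\Phi(x)=E[f_x'(F)-Ff_x(F)]$, so the whole problem reduces to rewriting the term $E[Ff_x(F)]$.

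For that term I would use the identity $F=-\delta DL^{-1}F$, valid on mean-zero functionals since $L=-\delta D$ and $LL^{-1}=\mathrm{id}$. Setting $u=-DL^{-1}F$ and applying the duality relationship $E(G\,\delta(u))=E\langle DG,u\rangle_{\mathcal{H}}$ with $G=f_x(F)$, together with the chain rule $Df_x(F)=f_x'(F)\,DF$, yields
$$E[Ff_x(F)]=E\big[f_x'(F)\,\langle DF,-DL^{-1}F\rangle_{\mathcal{H}}\big].$$
Substituting back, the signed difference becomes $P(F\le x)-\Phi(x)=E\big[f_x'(F)\,(1-\langle DF,-DL^{-1}F\rangle_{\mathcal{H}})\big]$. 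Using $\|f_x'\|_\infty\le 1$ and then Cauchy--Schwarz (equivalently Jensen) on the resulting $L^1$ norm gives
$$|P(F\le x)-\Phi(x)|\le E\big|1-\langle DF,-DL^{-1}F\rangle_{\mathcal{H}}\big|\le\sqrt{E[(1-\langle DF,-DL^{-1}F\rangle_{\mathcal{H}})^2]},$$
and taking the supremum over $x$ produces the first inequality.

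For the specialization to a single chaos, I would use that $L$ acts as multiplication by $-n$ on the $n$th Wiener chaos, so $LF=-nF$ forces $L^{-1}F=-\frac{1}{n}F$ and hence $-DL^{-1}F=\frac{1}{n}DF$. Then $\langle DF,-DL^{-1}F\rangle_{\mathcal{H}}=\frac{1}{n}\langle DF,DF\rangle_{\mathcal{H}}=\frac{1}{n}\|DF\|^2_{\mathcal{H}}$, which inserted into the first bound delivers the second one immediately.

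The main obstacle is the Stein-equation estimate $\|f_x'\|_\infty\le 1$: establishing existence, boundedness, and the sharp derivative bound for the solution $f_x$ of the first-order linear ODE with the Gaussian weight is the classical analytic heart of the argument, and it is this input, rather than the Malliavin manipulations (which are purely formal once the integration-by-parts formula is in hand), that carries the real content. A secondary technical point is that the chain rule and duality must be applied to $f_x(F)$ even though $f_x$ is only Lipschitz; I would handle this by the standard smoothing argument, approximating $f_x$ by smooth functions and passing to the limit using $F\in D^{1,2}$.
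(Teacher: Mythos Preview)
Your argument is correct and is essentially the Nourdin--Peccati proof via Stein's method combined with the Malliavin integration-by-parts identity $F=\delta(-DL^{-1}F)$. However, the paper does not prove this theorem at all: it is reproduced in the background section as a quoted result from Nourdin and Peccati (2009), introduced with ``To prove Berry Esseen bound we will use the following two theorems from Nourdin and Peccati (2009)'', and is then invoked as a black box in the Berry--Esseen section. So there is no paper proof to compare against; you have supplied the (standard) proof that the paper omits by citation.
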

\begin{thm}
Let $F_n$ , $n\geq 1$ be sequence of centered and squared integrable functional of Gaussian process $M=\{M(\phi);\phi\in\mathcal{H}\}$ such that $E(F_n^2)\rightarrow 1$ as $n\rightarrow\infty$. If the following three conditions held:

i) for every $n$, one has that $F_n\in \mathbb{D}^{1,2}$ and $F_n$ has an absolutely continuous law with respect to Lebesgue measure

ii) the quantity $\psi(n) =\sqrt{E[(1 - \langle D F_n,-D L^{−1}F_n\rangle_{\mathcal{H}})^2]}$ is such that: 

(a) $\psi(n)$ is finite for every $n$; 

(b) as $n\rightarrow\infty $, $\psi(n)$ converges to zero; 

and (c) there exists $m\geq 1$ such that $\psi(n) > 0$ for $n\geq m$;

iii) as $n\rightarrow\infty $ the two dimensional vector $\disp (F_n, \frac{ 1-\langle DF_n,-DL^{-1} F_n \rangle_{\mathcal{H}}}{\psi(n)})$ converges to distribution to a centered two-dimensional Gaussian vector $(N_1,N_2)$ such that $EN_1^2=EN_2^2=1$ and $EN_1N_2=\rho$. Then upper bound $d_{Kol}(F_n,N)\leq \psi(n)$ holds. Moreover, for every $z\in \mathbb{R}$ $$ \psi_n^{-1} [P (F_n \leq z )-\Phi(z) ]\rightarrow \frac{\rho}{3} \Phi(z)^{(3)}.$$\label{be}
\end{thm}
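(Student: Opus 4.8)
The plan is to prove this statement by the Malliavin--Stein method of Nourdin and Peccati (2009). The statement has two parts, the Kolmogorov bound $d_{Kol}(F_n,N)\le\psi(n)$ and the exact first-order expansion, and the first will fall out for free from the machinery built for the second. I would begin from Stein's equation for the indicator test function: for fixed $z$ let $f_z$ denote the bounded solution of $f_z'(x)-xf_z(x)=\mathbf{1}_{(-\infty,z]}(x)-\Phi(z)$, which is known to satisfy $\|f_z\|_\infty\le\sqrt{2\pi}/4$ and $\|f_z'\|_\infty\le 1$. Evaluating at $F_n$ and taking expectations turns the probability into $P(F_n\le z)-\Phi(z)=E[f_z'(F_n)-F_nf_z(F_n)]$.

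The key algebraic step is Malliavin integration by parts. Since $E[F_n]=0$, I can write $F_n=\delta(-DL^{-1}F_n)$, and the duality $E[F\delta(u)]=E\langle DF,u\rangle_{\mathcal{H}}$ together with the chain rule $Df_z(F_n)=f_z'(F_n)DF_n$ (legitimate because $f_z$ is Lipschitz and, by hypothesis (i), $F_n\in\mathbb{D}^{1,2}$ has an absolutely continuous law) gives $E[F_nf_z(F_n)]=E[f_z'(F_n)\langle DF_n,-DL^{-1}F_n\rangle_{\mathcal{H}}]$. Substituting back, $P(F_n\le z)-\Phi(z)=E\big[f_z'(F_n)\big(1-\langle DF_n,-DL^{-1}F_n\rangle_{\mathcal{H}}\big)\big]$. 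The Kolmogorov bound is then immediate: by $\|f_z'\|_\infty\le 1$ and Cauchy--Schwarz the right-hand side is bounded in absolute value by $\sqrt{E[(1-\langle DF_n,-DL^{-1}F_n\rangle_{\mathcal{H}})^2]}=\psi(n)$, uniformly in $z$, which is exactly the preceding theorem.

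For the exact asymptotics I would divide through by $\psi(n)$ and introduce the normalized remainder $G_n:=\psi(n)^{-1}\big(1-\langle DF_n,-DL^{-1}F_n\rangle_{\mathcal{H}}\big)$, which by construction has $E[G_n^2]=1$, so that $\psi(n)^{-1}\big(P(F_n\le z)-\Phi(z)\big)=E[f_z'(F_n)G_n]$. By hypothesis (iii) the pair $(F_n,G_n)$ converges in law to the centered Gaussian vector $(N_1,N_2)$ with unit variances and $E[N_1N_2]=\rho$, and one shows $E[f_z'(F_n)G_n]\to E[f_z'(N_1)N_2]$. Conditioning on $N_1$ (so that $E[N_2\mid N_1]=\rho N_1$) gives $E[f_z'(N_1)N_2]=\rho\,E[N_1f_z'(N_1)]$, and a direct, if slightly delicate, Gaussian integral using the explicit $f_z$ evaluates $E[N_1f_z'(N_1)]=\tfrac{1}{3}\Phi^{(3)}(z)$, producing the claimed limit $\tfrac{\rho}{3}\Phi^{(3)}(z)$. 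The factor $\tfrac{1}{3}$ is the Edgeworth correction in disguise: the leading departure from normality is classically $-\tfrac{1}{6}E[F_n^3]\,\Phi^{(3)}(z)$, and the Malliavin identity $E[F_n^3]=2E[F_n\langle DF_n,-DL^{-1}F_n\rangle_{\mathcal{H}}]=-2\psi(n)E[F_nG_n]$ with $E[F_nG_n]\to\rho$ converts this into $\tfrac{\rho}{3}\Phi^{(3)}(z)$.

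The main obstacle is the limit passage $E[f_z'(F_n)G_n]\to E[f_z'(N_1)N_2]$, since $f_z'$ is only piecewise continuous, having a jump at $z$; joint convergence in distribution does not automatically transfer an expectation of a product one of whose factors is discontinuous. Here I would exploit hypothesis (i): the absolute continuity of the law of $F_n$, and of the limit $N_1$, controls the mass that $F_n$ places near the point of discontinuity $z$, while the normalization $E[G_n^2]=1$ supplies the uniform integrability needed to pass the product to the limit. Making this rigorous --- in essence approximating $f_z'$ from above and below by continuous functions and using the absence of an atom at $z$ --- is the technical heart of the argument; everything else reduces to the standard Stein--Malliavin identity or to an explicit one-dimensional Gaussian computation.
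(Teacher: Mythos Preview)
The paper does not prove this theorem at all: it is reproduced verbatim (with attribution) from Nourdin and Peccati (2009) as a tool to be applied, not as a result to be established. There is therefore no ``paper's own proof'' to compare against; the paper's contribution lies in verifying the hypotheses of this theorem for the specific sequence $G_N$ arising from the volatility estimator, not in reproving the abstract statement.

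That said, your sketch is a faithful outline of the original Nourdin--Peccati argument. The core identity $P(F_n\le z)-\Phi(z)=E\big[f_z'(F_n)\big(1-\langle DF_n,-DL^{-1}F_n\rangle_{\mathcal{H}}\big)\big]$ via $F_n=\delta(-DL^{-1}F_n)$ and the chain rule is exactly their starting point, the Kolmogorov bound follows immediately by $\|f_z'\|_\infty\le 1$ and Cauchy--Schwarz, and the exact expansion proceeds by passing $E[f_z'(F_n)G_n]$ to the limit and evaluating $E[f_z'(N_1)N_2]=\rho\,E[N_1f_z'(N_1)]=\tfrac{\rho}{3}\Phi^{(3)}(z)$. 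You have also correctly identified the one genuine technicality, namely the limit passage with the discontinuous integrand $f_z'$, and the role of absolute continuity of the law of $F_n$ in handling it. So your proposal is correct as a proof of the cited theorem, but it goes well beyond what the paper itself undertakes.
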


\subsection*{Proof for main results}
\appendix\normalsize 
This part is required for the proof of the theorems mentioned in sec \ref{sec4}.
Let us calculate the typical term present in the summand of the estimator as follows.
$$\disp\log\frac{S_{t_{j+1}}}{S_{t_{j}}}=\mu (t_{j+1}-t_j)-\frac{1}{2}\sigma^2 (t_{j+1}-t_j)- \frac{1}{2}\sigma^2 (t_{j+1}^{2H}-t_j^{2H})+\sigma (B_{t_{j+1}}-B_{t_{j}}) +\sigma (B^H_{t_{j+1}}-B^H_{t_{j}}) $$

\noindent For simplicity set $\mu=0,\  t_{j+1}=\disp\frac{j+1}{N}, \  t_j=\frac{j}{N}, \ T_1=0, \  T_2=T=1$. Also denote $ A_j=\disp t_{j+1}-t_j=\frac{1}{N},\  E_j=t_{j+1}^{2H}-t_j^{2H}=\frac{(j+1)^{2H}-j^{2H}}{N^{2H}}, \  C_j=B_{\frac{j+1}{N}}-B_{\frac{j}{N}}, \ D_j=B^H_{\frac{j+1}{N}}-B^H_{\frac{j}{N}} $.

\noindent Then \begin{eqnarray*}
&&\sum_{j=0}^{N-1}\Big(\log\frac{S_{\frac{j+1}{N}}}{S_{\frac{j}{N}}}\Big)^2\\
&=& \sum_{j=0}^{N-1}\frac{\sigma^4}{4}(A_j+E_j)^2  + \sum_{j=0}^{N-1} (-\sigma^3) (A_j+E_j)(C_j+D_j)  + \sum_{j=0}^{N-1} \sigma^2 (C_j+D_j)^2 \\
&=& U_1+U_2+U_3
\end{eqnarray*}
where $U_1, U_2, U_3$ are the terms respectively.
 
\noindent Now the first term is 
\begin{eqnarray*}
U_1&=&\sum_{j=0}^{N-1}\frac{\sigma^4}{4}(A_j+E_j)^2 \\
&=& \frac{\sigma^4}{4}\sum_{j=0}^{N-1} \Big(\frac{(j+1)^{2H}-j^{2H}}{N^{2H}}+\frac{1}{N}\Big)^2\\
&=& \frac{\sigma^4}{4}\sum_{j=0}^{N-1} \Big( \Big(\frac{(j+1)^{2H}-j^{2H}}{N^{2H}} \Big)^2 -\frac{2}{N} \Big(\frac{(j+1)^{2H}-j^{2H}}{N^{2H}}\Big) +\frac{1}{N^2} \Big)\\     
&\le & \frac{\sigma^4}{4}\sum_{j=0}^{N-1} \Big(\frac{ 4H^2 (j+1)^{4H-2}}{N^{4H}} +\frac{2. 2H(j+1)^{2H-1}}{N^{1+2H}}+\frac{1}{N^2}\Big)\\
&\le &  \frac{\sigma^4}{4} \Big(\frac{4H^2. N^{4H-1}}{N^{4H}}+\frac{4H. N^{2H}}{N^{1+2H}}+\frac{1}{N} \Big)\\
&=& \frac{\sigma^4}{4N}(2H+1)^2\rightarrow 0 \  \mbox{as}\  N \rightarrow \infty
\end{eqnarray*}
Note that even if we have kept $\mu$ as it is we would still get $U_1\rightarrow\ 0$ as $N \rightarrow \infty$. 
For our next step calculation we set $\alpha=1, \beta=1$ in $M_t^H$. For simplicity again we may use $M_t$ instead of $M_t^H$. 
The second term is 
\begin{eqnarray*}
U_2&=&\sum_{j=0}^{N-1} (-\sigma^3) (A_j+E_j)(C_j+D_j) 
=-\sigma^3 \sum_{j=0}^{N-1} \Big( \frac{(j+1)^{2H}-j^{2H}}{N^{2H}}+\frac{1}{N}\Big)(M_{\frac{j+1}{N}}-M_{\frac{j}{N}})
\end{eqnarray*}
We note that $EU_2=0$.
 
The variance of the increments of $M_t^H$ is as follows:
\begin{eqnarray*}
&& E(M_{\frac{j+1}{N}}-M_{\frac{j}{N}})^2
= E(B_{\frac{j+1}{N}}-B_{\frac{j}{N}})^2+E(B^H_{\frac{j+1}{N}}-B^H_{\frac{j}{N}})^2
=\frac{1}{N^{2H}}+\frac{1}{N}
\end{eqnarray*} 
For $k\ne j$ the covariance of the increments of $M_t^H$ is
\begin{eqnarray*}
&& E(M_{\frac{j+1}{N}}-M_{\frac{j}{N}})(M_{\frac{k+1}{N}}-M_{\frac{k}{N}})\\
&=& E(B^H_{\frac{j+1}{N}}-B^H_{\frac{j}{N}})(B^H_{\frac{k+1}{N}}-B^H_{\frac{k}{N}})+E(B_{\frac{j+1}{N}}-B_{\frac{j}{N}})(B^H_{\frac{k+1}{N}}-B^H_{\frac{k}{N}})+E(B_{\frac{k+1}{N}}-B_{\frac{k}{N}})(B^H_{\frac{j+1}{N}}-B^H_{\frac{j}{N}})\\
&=&\frac{1}{2}\Big[|\frac{k-j+1}{N}|^{2H}+|\frac{k-j-1}{N}|^{2H}-2|\frac{k-j}{N}|^{2H} \Big]+0+0
\end{eqnarray*}

The variance of the second term is $E(U_2^2)$ which is equal to
\begin{eqnarray*}
&&\sigma^6 \sum_{k=0}^{N-1}\sum_{j=0}^{N-1}\Big(\frac{(j+1)^{2H}-j^{2H}}{N^{2H}}+\frac{1}{N}\Big)\Big(\frac{(k+1)^{2H}-k^{2H}}{N^{2H}}+\frac{1}{N}\Big)E(M_{\frac{j+1}{N}}-M_{\frac{j}{N}})E(M_{\frac{k+1}{N}}-M_{\frac{k}{N}})
\end{eqnarray*}
\begin{eqnarray*}
&=&\sigma^6\sum_{j=0}^{N-1}\Big(\frac{(j+1)^{2H}-j^{2H}}{N^{2H}}+\frac{1}{N}\Big)^2 \times [\frac{1}{N^{2H}}+\frac{1}{N}]\\
&&\qquad +\sigma^6 \mathop{\disp\sum_{k=0}^{N-1}\sum_{j=0}^{N-1}}_{k\ne j}\Big(\frac{(j+1)^{2H}-j^{2H}}{N^{2H}}+\frac{1}{N}\Big)\Big(\frac{(k+1)^{2H}-k^{2H}}{N^{2H}}+\frac{1}{N}\Big)\\
&& \qquad \qquad \qquad \qquad \times\frac{1}{2}\Big[|\frac{k-j+1}{N}|^{2H}+|\frac{k-j-1}{N}|^{2H}-2|\frac{k-j}{N}|^{2H} \Big]\\
&\le & 4\sigma^2 U_1[\frac{1}{N^{2H}}+\frac{1}{N}]+\frac{\sigma^6}{2}\mathop{\disp\sum_{k=0}^{N-1}\sum_{j=0}^{N-1}}_{k\ne j}\Big(\frac{2H(j+1)^{2H-1}}{N^{2H}}+\frac{1}{N}\Big)\Big(\frac{2H(k+1)^{2H-1}}{N^{2H}}+\frac{1}{N}\Big)\\
&& \qquad \qquad \qquad \qquad \qquad \qquad \qquad \qquad \times \Big[2\frac{1}{N^2}2H(2H-1)|\frac{k-j}{N}|^{2H-2}\Big]\\
&\le & 4\sigma^2 U_1[\frac{1}{N^{2H}}+\frac{1}{N}]+\frac{\sigma^6 2H(2H-1)}{N^{2H}}U_4
\end{eqnarray*}
Where 
\begin{eqnarray*}
U_4&=& \mathop{\disp\sum_{k=0}^{N-1}\sum_{j=0}^{N-1}}_{k\ne j}\Big(\frac{2H(j+1)^{2H-1}}{N^{2H}}+\frac{1}{N}\Big)\Big(\frac{2H(k+1)^{2H-1}}{N^{2H}}+\frac{1}{N}\Big)|k-j|^{2H-2}\\
&=& 2\sum_{k=1}^{N-1}k^{2H-2}\sum_{j=1}^{N-k}(\frac{2H j^{2H-1}}{N^{2H}}+\frac{1}{N})(\frac{2H (k+j)^{2H-1}}{N^{2H}}+\frac{1}{N})\\
&\le & 2\sum_{k=1}^{N-1}k^{2H-2}\sum_{j=1}^{N-k}(\frac{2H(k+j)^{2H-1}}{N^{2H}}+\frac{1}{N})^2\\
&=& 2\sum_{k=1}^{N-1}k^{2H-2}\sum_{j=k+1}^{N}(\frac{2H j^{2H-1}}{N^{2H}}+\frac{1}{N})^2\\
&=& 2\sum_{k=1}^{N-1}k^{2H-2}\sum_{j=1}^{N}(\frac{2H j^{2H-1}}{N^{2H}}+\frac{1}{N})^2\\
&\le & 2\sum_{k=1}^{N-1}k^{2H-2} (4H^2 N^{-1}+4H N^{-1}+N^{-1})\\
&\le & 2N^{2H-1}(2H+1)^2 N^{-1}\\
&=& 2N^{2H-2}(2H+1)^2 
\end{eqnarray*}
So, $E(U_2^2)\rightarrow 0 \ \mbox{as} \ N\rightarrow \infty$.
The third term is
\begin{eqnarray*}
U_3 &=& \sum_{j=0}^{N-1} \sigma^2 (C_j+D_j)^2 
= \sum_{j=0}^{N-1} \sigma^2 (M_{\frac{j+1}{N}}-M_{\frac{j}{N}})^2
\end{eqnarray*} 
Now, $E(U_3)=\sigma^2 \disp\sum_{j=0}^{N-1} E(M_{\frac{j+1}{N}}-M_{\frac{j}{N}})^2=N\sigma^2[\disp\frac{1}{N^{2H}}+\frac{1}{N}]$.
Let us denote $$T_3=\disp\frac{1}{N}(\frac{1}{N^{2H}}+\frac{1}{N})^{-1}U_3$$ so $E(T_3)=\sigma^2$ and define $S_3=T_3-\sigma^2$ so that $E(S_3)=0$. 
 
Let us write $(M_{\frac{j+1}{N}}-M_{\frac{j}{N}})^2$ as multiple integral of order 2 with respect to isonormal Gaussian process $M$ corresponding to MFBM $M_t$. We have
\begin{eqnarray*}
M_{\frac{j+1}{N}}-M_{\frac{j}{N}}= I_1(f_j)
\end{eqnarray*}
where, $f_j(s)=1_{(\frac{j}{N},\frac{j+1}{N}]}(s)$. Using the product formula for multiple Wiener Ito intergral by (\ref{tp}) (for details see Nourdin (2012))
\begin{eqnarray*}
(M_{\frac{j+1}{N}}-M_{\frac{j}{N}})^2&=& (I_1(f_j))^2
=[I_2(f_j\otimes_0f_j)+I_0(f_j\otimes_1 f_j)]
\end{eqnarray*}
\begin{eqnarray*}
(M_{\frac{j+1}{N}}-M_{\frac{j}{N}})(M_{\frac{k+1}{N}}-M_{\frac{k}{N}})&=& (I_1(f_j))(I_1(f_k))
=[I_2(f_j\otimes_0 f_k)+I_0(f_j\otimes_1 f_k)]
\end{eqnarray*}
and $$I_0(f_j\otimes_1 f_j)=\langle f_j,f_j\rangle_\mathcal{H}=\disp(\frac{1}{N}+\frac{1}{N^{2H}}) .$$ 
As $ \disp E(M_{\frac{j+1}{N}}-M_{\frac{j}{N}})^2=E[I_2(f_j\otimes_0f_j)]+(\frac{1}{N}+\frac{1}{N^{2H}})$ so we note that $$E [\disp\sum_{j=0}^{N-1}I_2 (f_j\otimes_0 f_j)]= 0.$$
So, substituting in the expression of $U_3$ we get
\begin{eqnarray*}
U_3&=& \sigma^2 \sum_{j=0}^{N-1}[I_2(f_j\otimes_0 f_j)+I_0(f_j\otimes_1 f_j)]=
\sigma^2 \sum_{j=0}^{N-1} [I_2 (f_j\otimes_0 f_j) +(\frac{1}{N}+\frac{1}{N^{2H}})]
\end{eqnarray*}
Let us denote $\disp V_3=U_3-\sigma^2 N (\frac{1}{N}+\frac{1}{N^{2H}})=\sigma^2 \sum_{j=0}^{N-1} I_2 (f_j\otimes_0 f_j) $. This leads $E(V_3)=0$ and we observe that $$S_3=\disp\frac{1}{N}(\frac{1}{N^{2H}}+\frac{1}{N})^{-1}V_3=\disp\frac{1}{N}(\frac{1}{N^{2H}}+\frac{1}{N})^{-1}\sigma^2 \sum_{j=0}^{N-1} I_2 (f_j\otimes_0 f_j).$$ We note that the expectation $ES_3=0$. Now $E(V_3^2)$ is equal to
\begin{eqnarray*}
&&\sigma^4 \sum_{k=0}^{N-1} \sum_{j=0}^{N-1}E \Big( [I_2 (f_j\otimes_0 f_j) ] [I_2 (f_k\otimes_0 f_k) ]\Big)\\
&=& \disp\sigma^4 [2 \sum_{k=0}^{N-1} \sum_{j=0}^{N-1}\langle(f_j\otimes_0 f_j),(f_k\otimes_0 f_k)\rangle_{\mathcal{H}^{\otimes 2}}]\\
&=& \sigma^4 [2 \sum_{k=0}^{N-1} \sum_{j=0}^{N-1} (\langle f_j,f_k\rangle_{\mathcal{H}})^2]\\
&=& \sigma^4 [2 \sum_{k=0}^{N-1} \sum_{j=0}^{N-1}\Big(E(M_{\frac{j+1}{N}}-M_{\frac{j}{N}})(M_{\frac{k+1}{N}}-M_{\frac{k}{N}})\Big)^2]\\
&=& \sigma^4[2N(\frac{1}{N^{2H}}+\frac{1}{N})^2+\mathop{\sum_{k=0}^{N-1} \sum_{j=0}^{N-1}}_{k\ne j}\frac{1}{2} (|\frac{k-j+1}{N}|^{2H}+|\frac{k-j-1}{N}|^{2H}-2|\frac{k-j}{N}|^{2H})^2]\\
\end{eqnarray*}
And further 
\begin{eqnarray*}
&&E(\sqrt{N}S_3)^2\\
&=&\frac{1}{N}(\frac{1}{N^{2H}}+\frac{1}{N})^{-2}E(V_3^2)\\
&\le & \sigma^4[2+(\frac{1}{N^{2H}}+\frac{1}{N})^{-2}\mathop{\sum_{k=0}^{N-1} \sum_{j=0}^{N-1}}_{k\ne j}\frac{1}{2N} (\frac{2H(2H-1)}{N^2}|\frac{k-j+1}{N}|^{2H-2})^2]\\
&\le & \sigma^4[2+(\frac{1}{N^{2H}}+\frac{1}{N})^{-2}\mathop{\sum_{k=1}^{N} \sum_{j=0}^{N-1}}_{k\ne j}\frac{1}{2N} (\frac{2H(2H-1)}{N^2}|\frac{k-j}{N}|^{2H-2})^2]\\
&\le & \sigma^4[2+(\frac{1}{N^{2H}}+\frac{1}{N})^{-2}\mathop{\sum_{k=0}^{N} \sum_{j=0}^{N}}_{k\ne j}\frac{1}{2N} (\frac{2H(2H-1)}{N^{2H}}|k-j|^{2H-2})^2]\\
&\le & \sigma^4[2+(\frac{1}{N^{2H}}+\frac{1}{N})^{-2}\frac{(2H(2H-1))^2}{2N^{1+4H}} \mathop{\sum_{k=0}^{N} \sum_{j=0}^{N}}_{k\ne j}|k-j|^{4H-4}]\\
&\le & \sigma^4[2+(\frac{1}{N^{2H}}+\frac{1}{N})^{-2}\frac{(2H(2H-1))^2}{2N^{1+4H}} \mathop{\sum_{k=-N}^{N} \sum_{j=k}^{N+k}}_{k\ne 0}|k|^{4H-4}]\\
&\le & \sigma^4[2+(\frac{1}{N^{2H}}+\frac{1}{N})^{-2}\frac{(2H(2H-1))^2}{2N^{4H}} \mathop{\sum_{k=-N}^{N} }_{k\ne 0}|k|^{4H-4}]\\
&\le & \sigma^4[2+(\frac{1}{N^{2H}}+\frac{1}{N})^{-2}\frac{(2H(2H-1))^2}{2N^{4H}} N^{4H-3}]
\end{eqnarray*}
So, we see that $\disp\lim_{N\rightarrow\infty}\frac{1}{N}(\frac{1}{N^{2H}}+\frac{1}{N})^{-2}E(V_3^2)=\disp\lim_{N\rightarrow\infty}E(\sqrt{N}S_3)^2 =c$ some constant. 
So now we can see why we should choose normalising constant as $\disp\frac{1}{N}(\frac{1}{N^{2H}}+\frac{1}{N})^{-1}$. 
This is interesting because for pure Brownian motion we do not need any normalising factor depending on $N$. Also for Pure fractional Brownian motion we need normalising factor depending on $N$ but with the restriction $H<\frac{3}{4}$ to ensure finiteness for similar variance term. But for MFBM, $ H\in(0,1)$, normalising constant turns out to be dependent on both $N$ and $H$.
Let $T_1,T_2,T_3$ are normalised $U_1,U_2,U_3$ respectively, i.e.
$\disp \frac{1}{N}(\frac{1}{N^{2H}}+\frac{1}{N})^{-1}U_k=T_k,\ k=1,2,3$
\subsection*{Appendix: Almost sure convergence $H\in(0,1)$}
$E(\disp\frac{1}{N}(\frac{1}{N^{2H}}+\frac{1}{N})^{-1}\disp \sum_{j=0}^{N-1}\Big(\log\frac{S_{\frac{j+1}{N}}}{S_{\frac{j}{N}}}\Big)^2) \rightarrow\sigma^2 $ as $N\ \rightarrow \ \infty$.
And also,

$Var(\disp\frac{1}{N}(\frac{1}{N^{2H}}+\frac{1}{N})^{-1}\disp \sum_{j=0}^{N-1}\Big(\log\frac{S_{\frac{j+1}{N}}}{S_{\frac{j}{N}}}\Big))^2=o_{\mathbb{P}}(1).$ 
So, $\hat\sigma^2$ converges in probability to $\sigma^2$. 

To get almost sure convergence let us use Chebyshev inequality, Cauchy Schwartz inequality and hypercontractivity property (\ref{hc}). We recall That $\disp U_1\leq \frac{c_{1H}}{N}, \ c_{1H}$ is constant depends on $H$ and $\sigma$, 
$$U_2=-\sigma^3\disp \sum_{j=0}^{N-1}\Big( \frac{(j+1)^{2H}-j^{2H}}{N^{2H}}+\frac{1}{N}\Big)I_1(f_j)$$ and lastly
$V_3=\disp \sigma^2 \sum_{j=0}^{N-1} I_2 (f_j\otimes_0 f_j) .$
And from previous calculation we have for constants $c_{2H},c_{3H} $ depends on $H$ and $\sigma$, $$\disp E(U_2^2)\leq \frac{c_{2H}}{N}(\frac{1}{N^{2H}}+\frac{1}{N})+\frac{c_{3H}}{N^2}=c_{2H}(\frac{1}{N^{2H+1}}+\frac{1}{N^2})+\frac{c_{3H}}{N^2}$$ and for constants $c_{4H},c_{5H} $ depends on $H$ and $\sigma$ $$\disp  E(V_3^2)\leq c_{4H}N(\frac{1}{N^{2H}}+\frac{1}{N})^2+ \frac{c_{5H}}{N^2}=c_{4H}(\frac{1}{N^{2H-\frac{1}{2}}}+\frac{1}{N^{\frac{1}{2}}})^2+ \frac{c_{5H}}{N^2}$$
\begin{eqnarray*}
\mbox{Now} &&\mathbb{P}(|\hat\sigma^2-\sigma^2|>\frac{1}{N^{\delta}})\\&\leq & N^{r \delta}E|\hat\sigma^2-\sigma^2|^r\\
&= & N^{r \delta}E|T_1+T_2+T_3-\sigma^2 |^r\\
&= & N^{r \delta}E|T_1+T_2+S_3 |^r\\
&= & N^{r \delta}\sum_{k_1+k_2+k_3=r}\frac{r!}{k_1!k_2!k_3!}E[T_1^{k_1}T_2^{k_2}S_3^{k_3}]\\
&= & N^{r \delta} \frac{1}{N^r}(\frac{1}{N^{2H}}+\frac{1}{N})^{-r}\sum_{k_1+k_2+k_3=r}\frac{r!}{k_1!k_2!k_3!}E[U_1^{k_1}U_2^{k_2}V_3^{k_3}] \\
&\leq & N^{r (\delta-1)} (\frac{1}{N^{2H}}+\frac{1}{N})^{-r}\sum_{k_1+k_2+k_3=r}\frac{r!}{k_1!k_2!k_3!}U_1^{k_1}[E[U_2^{2k_2}]E[V_3^{2k_3}]]^{\frac{1}{2}} \\
&\leq & N^{r (\delta-1)} (\frac{1}{N^{2H}}+\frac{1}{N})^{-r}\sum_{k_1+k_2+k_3=r}\frac{r!}{k_1!k_2!k_3!}U_1^{k_1}[(2k_2-1)^{k_2}(2k_3-1)^{2k_3}E[U_2^{2}]^{k_2}E[V_3^{2}]^{k_3}]^{\frac{1}{2}} \\
&\leq & N^{r (\delta-1)} (\frac{1}{N^{2H}}+\frac{1}{N})^{-r}\sum_{k_1+k_2+k_3=r}c_{k_1,k_2,k_3}U_1^{k_1}E[U_2^{2}]^{k_2}E[V_3^{2}]^{k_3}]^{\frac{1}{2}},\ \ c_{k_1,k_2,k_3} \ \mbox{is constant} \\
&\leq & N^{r (\delta-1)} (\frac{1}{N^{2H}}+\frac{1}{N})^{-r}\sum_{k_1+k_2+k_3=r}c'_{k_1,k_2,k_3}\frac{1}{N^{k_1}}(\frac{1}{N^{2H+1}}+\frac{1}{N^2})^{\frac{k_2}{2}}(\frac{1}{N^{4H-1}}+\frac{1}{N^{2H}}+\frac{1}{N})^{\frac{k_3}{2}}
\end{eqnarray*}
Let us calculate the above probability for $\disp H\in(0,\frac{1}{4})$, so
\begin{eqnarray*}
&&\mathbb{P}(|\hat\sigma^2-\sigma^2|>\frac{1}{N^{\delta}})\\
&\leq & N^{r (\delta-1)} (\frac{1}{N})^{-r}\sum_{k_1+k_2+k_3=r}c'_{k_1,k_2,k_3}\frac{1}{N^{k_1}}(\frac{1}{N^{4H-1}})^{\frac{k_2}{2}}(\frac{1}{N^{2H+1}})^{\frac{k_3}{2}}\\ 
&= & N^{r (\delta-1)} N^r\sum_{k_1+k_2+k_3=r}c'_{k_1,k_2,k_3}N^{-k_1-2k_2H+\frac{k_2}{2}-k_3H-\frac{k_3}{2}}\\
&\leq & N^{r (\delta-1)} \sum_{k_1+k_2+k_3=r}c'_{k_1,k_2,k_3}N^{\frac{3k_2}{2}+\frac{k_3}{2}-2H(r-k_1)}\\ 
&\leq & N^{r (\delta-1)} \sum_{k_1+k_2+k_3=r}c'_{k_1,k_2,k_3}N^{\frac{1}{2}(r-k_1)-2H(r-k_1)}\\ 
&= & N^{r (\delta-1)} \sum_{k_1+k_2+k_3=r}c'_{k_1,k_2,k_3}N^{r(\frac{1}{2}-2H)+k_1(2H-\frac{1}{2})}\\ 
&\leq & N^{r (\delta-1)} \sum_{k_1+k_2+k_3=r}c'_{k_1,k_2,k_3}N^{r(\frac{1}{2}-2H)} 
\end{eqnarray*}
We need $\disp r(\delta-1)+r(\frac{1}{2}-2H)<-1\Rightarrow \ r>\frac{1}{2H+\frac{1}{2}-\delta}$. So for $\disp H\in(0,\frac{1}{4})$ fix $\disp \delta<2H+\frac{1}{2}$, then for $r$ larger than $\disp \frac{1}{2H+\frac{1}{2}-\delta}$, $\disp \sum_{N=1}^{\infty}\mathbb{P}(|\hat\sigma^2-\sigma^2|>\frac{1}{N^{\delta}})<\infty.$
 
Let us calculate the same probability for $\disp H\in[\frac{1}{4},\frac{1}{2})$, so
\begin{eqnarray*}
&&\mathbb{P}(|\hat\sigma^2-\sigma^2|>\frac{1}{N^{\delta}})\\
&\leq & N^{r (\delta-1)} (\frac{1}{N})^{-r}\sum_{k_1+k_2+k_3=r}c'_{k_1,k_2,k_3}\frac{1}{N^{k_1}}(\frac{1}{N^{2H}})^{\frac{k_2}{2}}(\frac{1}{N^{2H+1}})^{\frac{k_3}{2}}\\ 
&=& N^{r (\delta-1)} N^r\sum_{k_1+k_2+k_3=r}c'_{k_1,k_2,k_3}N^{-k_1-Hk_2-Hk_3-\frac{k_3}{2}}\\
&\leq & N^{r (\delta-1)} \sum_{k_1+k_2+k_3=r}c'_{k_1,k_2,k_3}N^{r-H(r-k_1)-k_1}\\
&\leq & N^{r (\delta-1)} \sum_{k_1+k_2+k_3=r}c'_{k_1,k_2,k_3}N^{(1-H)(r-k_1)}\\
&\leq & N^{r (\delta-1)} \sum_{k_1+k_2+k_3=r}c'_{k_1,k_2,k_3}N^{r(1-H)}
\end{eqnarray*}
Here we need $\disp r(\delta-1)+r(1-H)<-1\Rightarrow\ r>\frac{1}{H-\delta}$. So for $\disp H\in[\frac{1}{4},\frac{1}{2})$ fix $\disp \delta <H$, then for $r$ larger than $\disp \frac{1}{H-\delta}$, $\disp \sum_{N=1}^{\infty}\mathbb{P}(|\hat\sigma^2-\sigma^2|>\frac{1}{N^{\delta}})<\infty.$

Let us again calculate the same probability for $\disp H\in[\frac{1}{2},1)$.
\begin{eqnarray*}
&&\mathbb{P}(|\hat\sigma^2-\sigma^2|>\frac{1}{N^{\delta}})\\
&\leq & N^{r (\delta-1)} (\frac{1}{N^{2H}}+\frac{1}{N})^{-r}\sum_{k_1+k_2+k_3=r}c'_{k_1,k_2,k_3}\frac{1}{N^{k_1}}(\frac{1}{N})^{\frac{k_2}{2}}(\frac{1}{N^2})^{{\frac{k_3}{2}}}\\
&= & N^{r (\delta-1)} (\frac{1}{N^{2H}}+\frac{1}{N})^{-r}\sum_{k_1+k_2+k_3=r}c'_{k_1,k_2,k_3}N^{-k_1-\frac{k_2}{2}-k_3}\\
&= & N^{r (\delta-1)} (\frac{1}{N^{2H}}+\frac{1}{N})^{-r}\sum_{k_1+k_2+k_3=r}c'_{k_1,k_2,k_3}N^{-r+\frac{k_2}{2}}\\
&= & N^{r (\delta-1)} (\frac{1}{N^{2H}}+\frac{1}{N})^{-r}\sum_{k_1+k_2+k_3=r}c'_{k_1,k_2,k_3}N^{-\frac{r}{2}}\\
&= & (N^{\frac{3}{2}-2H-\delta}+N^{\frac{1}{2}-\delta})^{-r}\sum_{k_1+k_2+k_3=r}c'_{k_1,k_2,k_3}
\end{eqnarray*}
For $\disp H\in[\frac{1}{2},1)$ fix $\disp \delta<\frac{1}{2}$ and choose $\disp r>\frac{1}{\frac{1}{2}-\delta}$,  $\disp \sum_{N=1}^{\infty}\mathbb{P}(|\hat\sigma^2-\sigma^2|>\frac{1}{N^{\delta}})<\infty.$ We apply Borel Cantelli lemma above all cases to get almost sure convergence.

\subsection*{Appendix: Asymptotic normality}
We recall that, $E(\sqrt{N}[\hat{\sigma}^2-\sigma^2]) \rightarrow c$ as $N\rightarrow \infty$. Let us  introduce the notation. 
$\disp F_N:=\sqrt{N}\frac{(\hat\sigma^2-\sigma^2)}{\sqrt{c}}$. Then $$F_N=\disp \frac{\sqrt{N}}{\sqrt{c}}[\frac{1}{N}(\frac{1}{N^{2H}}+\frac{1}{N})^{-1}(U_1+U_2+U_3)-\sigma^2]=\frac{\sqrt{N}}{\sqrt{c}}(T_1+T_2+T_3-\sigma^2).$$  From our previous calculations we see that $\disp\frac{\sqrt{N}}{\sqrt{c}}T_1\rightarrow 0$ and $\disp\frac{\sqrt{N}}{\sqrt{c}}T_2\rightarrow 0$ in $L_2$. So here we will show $$G_N:=\disp \frac{\sqrt{N}}{\sqrt{c}}(T_3-\sigma^2)=\disp \frac{\sqrt{N}}{\sqrt{c}}S_3$$ is asymptotically normally distributed and then by Slutsky's theorem $F_N$ will be asymptotically normal. To show $G_N$ is asymptotically normal we will use Theorem \ref{md} and \ref{fm}.

We recall $E(T_3)=\sigma^2,\ E(G_N)=0,\ E(F_N)=0$.
Also we note that $E(G_N^2)\rightarrow\ 1 $ as $N\rightarrow\ \infty$ and so $E(F_N^2)\rightarrow\ 1 $.

Now let us recall $T_3$ as multiple Wiener Ito integral
\begin{eqnarray*}
T_3= \disp \frac{1}{N}(\frac{1}{N^{2H}}+\frac{1}{N})^{-1} \sigma^2\sum_{j=0}^{N-1}[I_2(f_j\otimes_0 f_j)+(\frac{1}{N}+\frac{1}{N^{2H}})]
\end{eqnarray*} 
So $S_3$ as multiple Wiener Ito integral as
\begin{eqnarray*}
S_3= \disp \frac{1}{N}(\frac{1}{N^{2H}}+\frac{1}{N})^{-1} \sigma^2\sum_{j=0}^{N-1}[I_2(f_j\otimes_0 f_j)]
\end{eqnarray*} 

Using theorems stated in section (\ref{note}) we want to show that $\|DG_N\|^2_{\mathcal{H}}\rightarrow 2$ in $L^2$. For that matter we first show 
$\disp\lim_{N\rightarrow \infty}E[\|DG_N\|^2_{\mathcal{H}}]= 2$ and then $\disp\lim_{N\rightarrow \infty}E[\|DG_N\|^2_{\mathcal{H}}-2]^2=0$. Now, 
\begin{eqnarray}
&&[\|DG_N\|^2_{\mathcal{H}}-2]^2\\
&=&[\|DG_N\|^2_{\mathcal{H}}-E[\|DG_N\|^2_{\mathcal{H}}]+E[\|DG_N\|^2_{\mathcal{H}}]-2]^2\\
&=&[\|DG_N\|^2_{\mathcal{H}}-E[\|DG_N\|^2_{\mathcal{H}}]^2+[E[\|DG_N\|^2_{\mathcal{H}}]-2]^2\\
&&\qquad \qquad +2[\|DG_N\|^2_{\mathcal{H}}-E[\|DG_N\|^2_{\mathcal{H}}][E[\|DG_N\|^2_{\mathcal{H}}]-2]\\
&=&A+B+2C \label{varbias}
\end{eqnarray}
where $A, B, C$ are respective terms. Now $EC=0, B\rightarrow 0$ as $ N\rightarrow\infty$. For most purpose we will be interested in $A$.

Using (\ref{dd}) we get $$ \disp D_{t}S_3=2 \frac{1}{N}(\frac{1}{N^{2H}}+\frac{1}{N})^{-1}\sigma^2 \sum_{j=0}^{N-1}f_j(t)I_1(f_j).$$ So $$ \|DS_3\|_{\mathcal{H}}^2=\disp \frac{1}{N^2}(\frac{1}{N^{2H}}+\frac{1}{N})^{-2}4\sigma^4\sum_{k=0}^{N-1}\sum_{j=0}^{N-1} I_1(f_j)I_1(f_k)\langle f_j,f_k\rangle_{\mathcal{H}}.$$
Now $$E[\|DS_3\|_{\mathcal{H}}^2]=\disp \frac{1}{N^2}(\frac{1}{N^{2H}}+\frac{1}{N})^{-2}4\sigma^4\sum_{k=0}^{N-1}\sum_{j=0}^{N-1}(\langle f_j,f_k\rangle_{\mathcal{H}})^2$$ and $\disp\|DG_N\|=\frac{\sqrt{N}}{\sqrt{c}}\|DS_3\|_{\mathcal{H}}, \ \ \ E\|DG_N\|=\frac{\sqrt{N}}{\sqrt{c}}E(\|DS_3\|_{\mathcal{H}}) $

We have previously shown that $\disp E(\sqrt{N}S_3)^2 =c=2\sigma^4$, and similar calculation leads to $\disp \lim_{N\rightarrow \infty} E[\|DS_3\|_{\mathcal{H}}^2]=2 \disp \lim_{N\rightarrow \infty}E(\sqrt{N}S_3)^2=c$. 
So $E[\|DG_N\|_{\mathcal{H}}^2]\rightarrow 2$ as $N\rightarrow\infty$. 
Let us calculate the following
\begin{eqnarray*}
&&\|DS_3\|_{\mathcal{H}}^2-E[\|DS_3\|_{\mathcal{H}}^2]\\
&=&\disp \frac{1}{N^2}(\frac{1}{N^{2H}}+\frac{1}{N})^{-2}4\sigma^4\sum_{k=0}^{N-1}\sum_{j=0}^{N-1}[\left( I_2(f_j\otimes_0 f_k)+I_0(f_j\otimes_1 f_k)\right)\langle f_j,f_k\rangle_{\mathcal{H}}-\langle f_j,f_k\rangle_{\mathcal{H}}^2]\\
&=& \disp \frac{1}{N^2}(\frac{1}{N^{2H}}+\frac{1}{N})^{-2}4\sigma^4\sum_{k=0}^{N-1}\sum_{j=0}^{N-1}[ I_2(f_j\otimes_0 f_k)\langle f_j,f_k\rangle_{\mathcal{H}}]
\end{eqnarray*}

And then 
\begin{eqnarray}
&&E[\|DS_3\|_{\mathcal{H}}^2-E[\|DS_3\|_{\mathcal{H}}^2]]^2\\
&=&16\sigma^{8} \frac{1}{N^4}(\frac{1}{N^{2H}}+\frac{1}{N})^{-4} E[\sum_{k=0}^{N-1}\sum_{j=0}^{N-1}[ I_2(f_j\otimes f_k)\langle f_j,f_k\rangle_{\mathcal{H}}]]^2\\
&=&16\sigma^{8} \frac{1}{N^4}(\frac{1}{N^{2H}}+\frac{1}{N})^{-4} \sum_{k,k'=0}^{N-1}\sum_{j,j'=0}^{N-1}\langle f_j,f_k\rangle_{\mathcal{H}}\langle f_{j'},f_{k'}\rangle_{\mathcal{H}}\langle f_j,f_{j'}\rangle_{\mathcal{H}}\langle f_k,f_{k'}\rangle_{\mathcal{H}}
\label{4m}\end{eqnarray}

So, our target quantity is \begin{equation}E[\|DG_N\|_{\mathcal{H}}^2-E[\|DG_N\|_{\mathcal{H}}^2]]^2=\disp \frac{N^2}{c^2}E[\|DS_3\|_{\mathcal{H}}^2-E[\|DS_3\|_{\mathcal{H}}^2]]^2 .
\end{equation}

Let us denote the adjusted normalising constant, adjusted for the terms $$\disp \sum_{k,k'=0}^{N-1}\sum_{j,j'=0}^{N-1}\langle f_j,f_k\rangle_{\mathcal{H}}\langle f_{j'},f_{k'}\rangle_{\mathcal{H}}\langle f_j,f_{j'}\rangle_{\mathcal{H}}\langle f_k,f_{k'}\rangle_{\mathcal{H}}
$$ in target quantity as $K$ with  $\disp K^{-1}=\frac{1}{N^2}(\frac{1}{N^{2H}}+\frac{1}{N})^{-4}$, ignoring the constant $\disp\frac{16\sigma^{8}}{c^2}=4$.

For $j=k$,  $\langle f_j,f_k\rangle_{\mathcal{H}}=I_0(f_j\otimes f_j)= \disp (\frac{1}{N}+\frac{1}{N^{2H}})$;  for $|k-j|= 1$,  $\langle f_j,f_k\rangle_{\mathcal{H}}=I_0(f_j\otimes_1 f_k)\le |2^{2H}-2| \disp \frac{1}{N^{2H}}$ and for $|k-j|> 1$,  $\langle f_j,f_k\rangle_{\mathcal{H}}=I_0(f_j\otimes_1 f_k) \le \disp\frac{|2H(2H-1)||j-k|^{2H-2}}{N^{2H}}$.

To calculate contribution from terms in (\ref{4m}) let us introduce partition of $4$ as $$\{\{4\},\{3,1\},\{2,2\},\{2,1,1\},\{1,1,1,1\}\}.$$ This is required as there are four indices $j,j',k,k'$ in the summation (\ref{4m}).

{\bf Case : all index same}

For i) $j=k=j'=k'$, there are $N$ terms in the sum and value of$$\langle f_j,f_k\rangle_{\mathcal{H}}\langle f_{j'},f_{k'}\rangle_{\mathcal{H}}\langle f_j,f_{j'}\rangle_{\mathcal{H}}\langle f_k,f_{k'}\rangle_{\mathcal{H}} =\disp(\frac{1}{N}+\frac{1}{N^{2H}})^4.$$ So total contribution due to i) is $\disp N(\frac{1}{N}+\frac{1}{N^{2H}})^4$. 

{\bf Case : three index same, one different: one-apart}

For ii) $j=k=j'$ and $|k'-k|=1$, there are $2(N-1)$ terms in the sum and value of $$\langle f_j,f_k\rangle_{\mathcal{H}}\langle f_{j'},f_{k'}\rangle_{\mathcal{H}}\langle f_j,f_{j'}\rangle_{\mathcal{H}}\langle f_k,f_{k'}\rangle_{\mathcal{H}}=\disp(\frac{1}{N}+\frac{1}{N^{2H}})^2|2^{2H}-2|^2\frac{1}{N^{4H}}$$.

Also for iii) $j=k=k'$, $|j'-j|=1$, iv) $j'=k'=j$,  $|k-k'|=1$, v) $j'=k'=k$, $|j-j'|=1$ the contribution is same as that of the case $j=k=j'$ and $|k'-k|=1$.

{\bf Case : two pairs of same index: one-apart}

For vi) $j=k$, $j'=k'$ and $|j-j'|=1$, vii) $j=j'$, $k=k'$ and $|j-j'|=1$ have same contribution of the case $j=k=j'$ and $|k'-k|=1$.

So total contribution due to ii) to vii) is $12(N-1)\disp(\frac{1}{N}+\frac{1}{N^{2H}})^2|2^{2H}-2|^2\frac{1}{N^{4H}} $.
 
For viii) $j=k'$, $j'=k$ and $|j-j'|=1$ there are $2(N-1)$ terms in the sum and value of $\langle f_j,f_k\rangle_{\mathcal{H}}\langle f_{j'},f_{k'}\rangle_{\mathcal{H}}\langle f_j,f_{j'}\rangle_{\mathcal{H}}\langle f_k,f_{k'}\rangle_{\mathcal{H}}=\disp |2^{2H}-2|^4\frac{1}{N^{8H}}$. So total contribution due to viii) is $2(N-1)\disp |2^{2H}-2|^4\frac{1}{N^{8H}}$.

{\bf Case : three index same, one different: more than one apart}
 
For ix) $j=k=j'$ and $|k'-k|>1$ the sum becomes 
\begin{eqnarray*}
&& \sum_{j'=0}^{N-1}\sum_{j=k=j', |k'-k|>1}\langle f_j,f_k\rangle_{\mathcal{H}}\langle f_{j'},f_{k'}\rangle_{\mathcal{H}}\langle f_j,f_{j'}\rangle_{\mathcal{H}}\langle f_k,f_{k'}\rangle_{\mathcal{H}}\\&=&\sum_{j'=0}^{N-1}\sum_{j=k=j', |k'-k|>1}\disp(\frac{1}{N}+\frac{1}{N^{2H}})^2|2H(2H-1)|^2|j'-k'|^{2H-2}|k-k'|^{2H-2}\frac{1}{N^{4H}}\\
&=&\disp(\frac{1}{N}+\frac{1}{N^{2H}})^2\frac{|2H(2H-1)|^2}{N^{4H}} \sum_{j'=0}^{N-1}\sum_{j=k=j', |k'-k|>1}|j'-k'|^{2H-2}|k-k'|^{2H-2}\\
&=&\disp(\frac{1}{N}+\frac{1}{N^{2H}})^2\frac{|2H(2H-1)|^2}{N^{4H}} \mathop{\sum_{j'=0}^{N-1}\sum_{k'=0}^{N-1}}_{|j'-k'|>1}|j'-k'|^{4H-4}\\
&=&\disp(\frac{1}{N}+\frac{1}{N^{2H}})^2\frac{|2H(2H-1)|^2}{N^{4H}} N\mathop{\sum_{j'=0}^{N-1}}|j'|^{4H-4}\\
&=&\disp(\frac{1}{N}+\frac{1}{N^{2H}})^2\frac{|2H(2H-1)|^2}{N^{4H}} N^{4H-2}\\
&=&\disp(\frac{1}{N}+\frac{1}{N^{2H}})^2\frac{|2H(2H-1)|^2}{N^2}
\end{eqnarray*}
We note that above contribution is due to $(N^2-3N+2)$ many terms.

For x) $j=k=k'$, $|j'-j|>1$, xi) $j'=k'=j$,  $|k-k'|>1$, xii) $j'=k'=k$, $|j-j'|>1$ the contribution is same as that of the case $j=k=j'$ and $|k'-k|>1$, and each contribution is due to $(N^2-3N+2)$ many terms.

{\bf Case : two pairs of same index: more than one apart}

For xiii) $j=k$, $j'=k'$ and $|j-j'|>1$ xiv)$j=j'$, $k=k'$ and $|j-j'|>1$ the contribution is same as that of the case $j=k=j'$ and $|k'-k|>1$. Here each contribution is due to $(N^2-3N+2)$ many terms.

So total contribution due to ix) to xiv) is $6\disp\frac{1}{N^2}(\frac{1}{N}+\frac{1}{N^{2H}})^2|2H(2H-1)|^2$.

For xv) $j=k'$, $j'=k$ and $|j-j'|>1$ the sum becomes 
\begin{eqnarray*}
&& \mathop{\sum_{j=0}^{N-1}}_{j=k'}\mathop{\sum_{ j'=0}^{N-1}}_{j'=k,|j-j'|>1}\langle f_j,f_k\rangle_{\mathcal{H}}\langle f_{j'},f_{k'}\rangle_{\mathcal{H}}\langle f_j,f_{j'}\rangle_{\mathcal{H}}\langle f_k,f_{k'}\rangle_{\mathcal{H}}\\
&=&\mathop{\sum_{j=0}^{N-1}}_{j=k'}\mathop{\sum_{ j'=0}^{N-1}}_{j'=k,|j-j'|>1}\disp|2H(2H-1)|^4|j-k|^{2H-2}|k-k'|^{2H-2}|j-j'|^{2H-2}|k-k'|^{2H-2}\frac{1}{N^{8H}}\\
&=&\sum_{j=0}^{N-1}\sum_{|j'-j|>1, j'=0}^{N-1}\disp|2H(2H-1)|^4|j-j'|^{8H-8}\frac{1}{N^{8H}}\\
&= &\frac{|2H(2H-1)|^4}{N^{8H}}\sum_{j=0}^{N-1}\sum_{|j'-j|>1, j'=0}^{N-1}|j-j'|^{8H-8}\\
&=& \frac{|2H(2H-1)|^4}{N^{8H}}N\sum_{j=0}^{N-1}|j|^{8H-8}\\
&=& \frac{|2H(2H-1)|^4}{N^{8H}}N^{8H-6}\\
&=& \frac{|2H(2H-1)|^4}{N^{6}}
\end{eqnarray*}

From now onwards let us consider the cases but we will calculate the contribution corresponding to those at the end.

{\bf Case : two same index, third index: one apart, fourth index:  one apart}

There are $8$ cases like following have same contribution.

$j=k$, $|j-j'|=1$ and $|j-k'|=2, |j'-k'|=1$ 

There are $4$ cases like below have same contribution.

$j=k'$, $|j-j'|=1$ and $|j-k|=2, |j'-k'|=1$ 

{\bf Case : two same index, third index: one apart, fourth index: more than one apart}

There are $8$ cases like below.

$j=k$, $|j-j'|=1$ and $|k-k'|>1, |j'-k'|=1$ 

$j=k$, $|k-k'|=1$ and $|j-j'|>1, |j'-k'|=1$ 

There are $4$ cases like below.

$j=k'$, $|j-j'|=1$ and $|k-k'|>1, |j'-k'|=1$ 

$j=k'$, $|k-k'|=1$ and $|j-j'|>1, |j'-k'|=1$ 

{\bf Case : two same index, third index: more than one apart, fourth index: more than one apart}

There are $8$ cases like following have same contribution.

For $j=k$, $|j-j'|=1$ and $|k-k'|>1, |j'-k'|>1$

There are $4$ cases like below have same contribution.

$j=k'$, $|j-j'|=1$ and $|j-k|>1, |j'-k'|>1$ 

{\bf Case : all four indices / indexes different}

There are several (but finite) cases as follows: all four one apart, three of them one apart and one of them more than one apart, two pairs are more than one apart and each pair one apart, two of them one apart and third and fourth both more than one apart and lastly all fore more than one apart. Let us note that contribution from the last case will be highest and all other cases mentioned above have contribution less than that. As there are finitely many cases it is enough to consider the following case.

For $j\neq k \neq j' \neq k' $ and all are more than one units apart the the sum becomes 
\begin{eqnarray*}
&&\sum_{j=0}^{N-1}\mathop{\sum_{j'=0}^{N-1}}_{|j'-j|>2}\mathop{\sum_{k=0}^{N-1}}_{|k-j|>2,|k-j'|>2}\mathop{\sum_{ k'=0}^{N-1}}_{|k'-j'|>2,|k'-k|>2,|k'-j|>2}\langle f_j,f_k\rangle_{\mathcal{H}}\langle f_{j'},f_{k'}\rangle_{\mathcal{H}}\langle f_j,f_{j'}\rangle_{\mathcal{H}}\langle f_k,f_{k'}\rangle_{\mathcal{H}}\\
&=&\mathop{\sum_{j,j',k,k'=0}^{N-1}}_{with \ restriction}\disp|2H(2H-1)|^4\frac{1}{N^{8H}}|j-k|^{2H-2}|j'-k'|^{2H-2}|j-j'|^{2H-2}|k-k'|^{2H-2}\\
&\leq & \disp\frac{|2H(2H-1)|^4}{N^{8H}}N^{4(2H-1)}\\
&=&\disp|2H(2H-1)|^4\frac{1}{N^4}
\end{eqnarray*}

Now we observe that for each of the above cases the contribution due to $$\disp \mathop{\sum_{k,k'=0}^{N-1}\sum_{j,j'=0}^{N-1}}_{with\ restriction}\langle f_j,f_k\rangle_{\mathcal{H}}\langle f_{j'},f_{k'}\rangle_{\mathcal{H}}\langle f_j,f_{j'}\rangle_{\mathcal{H}}\langle f_k,f_{k'}\rangle_{\mathcal{H}}$$ when divided by $K$ tends to zero. As an example let us show this calculation for last case.
\begin{eqnarray*}
&&\frac{1}{K}|2H(2H-1)|^4\frac{1}{N^4}\\
&=& (\frac{1}{N}+\frac{1}{N^{2H}})^{-4}\frac{1}{N^6}|2H(2H-1)|^4\\
&=& |2H(2H-1)|^4(N^{\frac{6}{4}}(\frac{1}{N}+\frac{1}{N^{2H}}))^{-4}\\
&=& |2H(2H-1)|^4(N^{\frac{1}{2}}+N^{\frac{3}{2}-2H})^{-4}\rightarrow \ 0 \ \mbox{as} \ N\rightarrow\infty
\end{eqnarray*}
As there are finitely many cases and each normalised term tends to zero. So we get the asymptotic normality for the estimator.

\subsection*{Berry Esseen bound}
First recall that $E(G_N)=0$ and $E(G_N^2)\rightarrow 1$ as $N\rightarrow\infty$.
Let us calculate similar quantity $\psi(N)$ as in the theorem \ref{ou} for $\disp G_N=\frac{1}{\sqrt{2N}}(\frac{1}{N}+\frac{1}{N^{2H}})^{-1}\sum_{j=0}^{N-1}I_2(f_j\otimes_0 f_j)$. Now we calculate 
\begin{eqnarray*}
&&(1-\langle DG_N, -DL^{-1}G_N\rangle_{\mathcal{H}})^2\\
&=&(1-\frac{1}{2}\|DG_N\|^2_{\mathcal{H}} )^2\\
&=& \frac{1}{4}E[2-\|DG_N\|^2_{\mathcal{H}} ]^2\\
&=& \frac{1}{4}[E(A)+E(B)] \ \mbox{using (\ref{varbias})}\\
\end{eqnarray*}

Then $\disp E(1-\langle DG_N, -DL^{-1}G_N\rangle_{\mathcal{H}})^2\approx \frac{1}{N}$, from calculation in previous section, dominant term contribution coming from the case: all index same and after dividing by $K$. 

So, $ \disp\psi(N)=\sqrt{E(1-\langle DG_N, -DL^{-1}G_N\rangle_{\mathcal{H}})^2}\approx\frac{1}{\sqrt{N}}\rightarrow 0$ satisfies condition (b) in Theorem \ref{be}.

Let $\disp R_N=\frac{1-\langle DG_N,-DL^{-1}G_N\rangle_{\mathcal{H}}}{\psi(N)}$. Using calculation from previous section $E(R_N)\rightarrow 0$ and $E(R_N^2)\rightarrow 1$ as $N\rightarrow\infty$.

Let us calculate covariance $\rho$ as in Theorem \ref{be}.
For that purpose recall $$
R_N=\frac{2-\|DG_N\|_{\mathcal{H}}^2}{2\psi(N)}
=\frac{\sqrt{N}}{2}[2-E(\|DG_N\|_{\mathcal{H}}^2)+E(\|DG_N\|_{\mathcal{H}}^2)-\|DG_N\|_{\mathcal{H}}^2]
$$
and $$\|DG_N\|_{\mathcal{H}}^2-E(\|DG_N\|_{\mathcal{H}}^2)=\frac{2}{N}(\frac{1}{N}+\frac{1}{N^{2H}})^{-2}\sum_{j=0}^{N-1}\sum_{k=0}^{N-1}I_2(f_j\otimes_0 f_k)\langle f_j,f_k\rangle_{\mathcal{H}} $$

\begin{eqnarray*}
\rho&=& \lim_{N\rightarrow\infty}E(G_N R_N)\\
&=& \lim_{N\rightarrow\infty} E(G_N \frac{\sqrt{N}}{2}[E(\|DG_N\|_{\mathcal{H}}^2)-\|DG_N\|_{\mathcal{H}}^2]\\
&=&-\frac{1}{\sqrt{2}N}(\frac{1}{N}+\frac{1}{N^{2H}})^{-3}E[\sum_{i=0}^{N-1}  I_2(f_i\otimes_0 f_i)   \sum_{j=0}^{N-1}\sum_{k=0}^{N-1}I_2(f_j\otimes_0 f_k)\langle f_j,f_k\rangle_{\mathcal{H}}]
\end{eqnarray*}
Using $E[I_2(f_i\otimes_0 f_j)I_2(f_k\otimes_0 f_l)=\langle f_i,f_k\rangle_{\mathcal{H}}\langle f_j,f_l\rangle_{\mathcal{H}}+\langle f_j,f_k\rangle_{\mathcal{H}}\langle f_i,f_l\rangle_{\mathcal{H}}$ we can calculate $\rho$ explicitly.

Now maximum contribution from the sum above when $i=j=k$ and that is 

$\disp\sum_{j=0}^{N-1}\langle f_j,f_j\rangle_{\mathcal{H}}^3=N(\frac{1}{N}+\frac{1}{N^{2H}})^3$. After multiplying by  $\disp -\frac{1}{\sqrt{2}N}(\frac{1}{N}+\frac{1}{N^{2H}})^{-3}$ it becomes $-\frac{1}{\sqrt{2}}$.

The minimum contribution from the sum above when $i\ne j\ne k$ and that is 

$\disp\sum_{i=0}^{N-1}\sum_{j=0}^{N-1}\sum_{k=0}^{N-1}\langle f_i,f_j\rangle_{\mathcal{H}}\langle f_j,f_k\rangle_{\mathcal{H}}\langle f_j,f_k\rangle_{\mathcal{H}}=\disp\sum_{i=0}^{N-1}\sum_{j=0}^{N-1}\sum_{k=0}^{N-1}c_H\frac{(|i-j||j-k||k-i|)^{2H-2}}{N^{6H}}=\frac{1}{N^3}$, $c_H$ being constant depends on $H$. After multiplying by  $\disp -\frac{1}{\sqrt{2}N}(\frac{1}{N}+\frac{1}{N^{2H}})^{-3}$ and taking limit it becomes $0$.
All other cases (finitely many) also the final contribution turns out to be $0$. Therefore $\rho=-\frac{1}{\sqrt{2}}$.

Using Theorem \ref{be} $\disp \sqrt{N}(P( G_N \leq x)-\Phi(x))\rightarrow -\frac{\Phi^{(3)}(x)}{3\sqrt{2}}$ as $N\rightarrow\infty$.

To get the result for $F_N$ we note that $F_N=I_2+I_1+I_0$ where $I_n$ is multiple integral of order $n$. Using $DI_1$ is $I_0$, $D$ being Malliavin derivative; correlation between $I_0,I_1$, $I_0,I_2$ and $I_0,I_2$ are zero;  and result we have proved for$G_N$ which is $I_2$; we can get Berry Esseen bound for $F_N$ and hence the result.

\section{Acknowledgement}
Author wants to acknowledge Department of Science and Technology, India, for financial support to conduct this research work.

\end{document}